\documentclass[10pt, twocolumn]{article}
\usepackage{amssymb,amsfonts,amsmath,amsthm}
\usepackage{adjustbox}
\usepackage{graphicx}
\usepackage{enumerate}
\usepackage[round]{natbib}
\usepackage{url} 
\usepackage{tikz}
\usepackage{centernot}
\usepackage{setspace}

\newcommand{\blind}{1}

\addtolength{\oddsidemargin}{-.4in}%
\addtolength{\evensidemargin}{-.9in}%
\addtolength{\textwidth}{.8in}%
\addtolength{\textheight}{1.7in}%
\addtolength{\topmargin}{-1in}%


\newtheorem{lem}{Lemma}
\newtheorem{cor}{Corollary}
\newtheorem{prp}{Proposition}
\newtheorem{rmk}{Remark}
\newtheorem{thm}{Theorem}
\newtheorem{exm}{Example}
\newtheorem{dfn}{Definition}

\DeclareMathOperator*{\esssup}{ess\,sup}
\DeclareMathOperator*{\essinf}{ess\,inf}

\begin{document}

	\def\spacingset#1{\renewcommand{\baselinestretch}%
	{#1}\small\normalsize} \spacingset{1}

	
	\if1\blind
	{
	  \title{\bf Post-hoc $\alpha$ Hypothesis Testing and the Post-hoc $p$-value}
	  \author{Nick W. Koning\hspace{.2cm}\\
	    Econometric Institute, Erasmus University Rotterdam, the Netherlands}
	  \maketitle
	} \fi
	
	\if0\blind
	{
	  \bigskip
	  \bigskip
	  \bigskip
	  \begin{center}
	    {\LARGE\bf Post-hoc $\alpha$ Hypothesis Testing}
	\end{center}
	  \medskip
	} \fi
	
	\bigskip
	\begin{abstract}
		In traditional hypothesis testing one must pre-specify the significance level $\alpha$ to bound the `size' of the test: its probability to falsely reject the hypothesis.
		Indeed, a data-dependent selection of $\alpha$ would generally distort the size, possibly making it larger than the specified level $\alpha$.
		We explore hypothesis testing with a data-dependent choice of $\alpha$ by guaranteeing that there is no such size distortion in expectation, even if the level $\alpha$ is arbitrarily selected based on the data.
		Unlike regular $p$-values, resulting `post-hoc $p$-values' allow us to `reject at level $p$' and still provide this guarantee.
		Interestingly, we find that $p$ is a post-hoc $p$-value if and only if $1/p$ is an $e$-value, a recently introduced measure of evidence.
		While often treated as different paradigms, this reveals $e$-values are simply $p$-values under a stronger error guarantee, thinly veiled by the reciprocal $p = 1/e$.
		Moreover, we extend classical optimal testing to optimal post-hoc testing.
		Finally, we apply our work to close Markov's inequality into a post-hoc $\alpha$ equality, and we study more general forms of post-hoc testing that require us to generalize beyond $e$-values.
	\end{abstract} \\
	
	\noindent%
	{\it Keywords:} $p$-values, $e$-values, $p$-hacking, data-dependent level.

\section{Introduction}
		Testing hypotheses is the cornerstone of the modern scientific method.
		An unfortunate feature of traditional hypothesis testing is that one must pre-specify a significance level `$\alpha$' to bound the \emph{size} or \emph{Type I error}: the probability to falsely reject a true hypothesis.
		This pre-specification of $\alpha$ has shaped empirical scientific discourse over the past century, and has led to several widely known problems:
		
		\begin{itemize}
			\item \textbf{$\alpha$-hacking}. The process of specifying the level $\alpha$ is typically not publicly observed, so that outsiders cannot verify whether it was truly pre-specified or secretly selected post-hoc: after seeing the data. To protect against accusations of such `$\alpha$-hacking', it is therefore near-universal practice to use a standardized level such as $\alpha = .05$.
			\item \textbf{File-drawer and publication bias}. 
			A finding with a $p$-value larger than the pre-specified $\alpha$ cannot be claimed as a discovery.
			This makes such findings substantially less likely to be published or even pursued for publication.
			This is known as the file-drawer problem, as these findings are often indefinitely relegated to a file-drawer \citep{rosenthal1979file}.
			As a consequence, the scientific literature often contains a biased selection of all the collected evidence.
			
			\item \textbf{$p$-hacking}. The use of a pre-specified $\alpha$ incentivizes `$p$-hacking': the practice of modifying the analysis based on the data in order to push a $p$-value below the pre-specified $\alpha$ \citep{simonsohn2014p}.
		\end{itemize}
		
		These problems and their downstream effects, combined with pervasive misinterpretation, have led to the radical proposal to abandon traditional hypothesis testing altogether \citep{amrhein2018remove, amrhein2019scientists, mcshame2019abandon, wasserstein2019moving}.
		Unfortunately, doing so would simultaneously discard a guarantee on the probability of making a false discovery.
		
		
		We study a fundamental solution to these problems: testing under a data-dependent selection of the level $\alpha$.
		Selecting the level $\alpha$ based on the data causes a distortion of the size compared to the selected level.
		We quantify this size distortion using the ratio $\text{size} / \alpha$ and develop a theory of \emph{post-hoc $\alpha$ hypothesis testing} under the guarantee that this size distortion is at most 1 in expectation for \emph{any} data-dependent choice of the level $\alpha$.
		Traditional hypothesis tests only offer this guarantee for pre-specified (or independently specified) levels $\alpha$.
		
		
		If we intend to make the most powerful claim possible, then we should use the smallest data-dependent level $\alpha$ for which we reject: the $p$-value.
		We call the $p$-value of a post-hoc $\alpha$ hypothesis test a \emph{post-hoc $p$-value}.
		Such post-hoc $p$-values are also $p$-values in the traditional sense, but their corresponding rejection decisions offer a much stronger error guarantee.
		
		Indeed, we can truly `reject at level $p$' when using a post-hoc $p$-value $p$, and have the guarantee that there is no size distortion in expectation.
		The claim associated with a traditional $p$-value is much weaker: we can only interpret it as `the smallest level at which we would have rejected, had we pre-specified $\alpha = p$', if we want to maintain its guarantee.
			
		A major side-benefit is that post-hoc $p$-values from multiple studies are easily combined.
		In particular, multiplying independent post-hoc $p$-values together yields another post-hoc $p$-value.
		Moreover, a harmonic average of post-hoc $p$-values is still a post-hoc $p$-value.
		Traditional $p$-values are much less straightforward to combine.
		
		Post-hoc $p$-values and tests are surprisingly easy to derive.
		In particular, we find that $p$ is a post-hoc $p$-value if and only if the expectation of its reciprocal is at most one: $\mathbb{E}\, 1/p \leq 1$ under our hypothesis.
		This means that $p$ is a post-hoc $p$-value when its reciprocal is an \emph{$e$-value}: a recently proposed measure of evidence \citep{howard2021time, shafer2021testing, vovk2021values, grunwald2023safe, ramdas2023gametheoretic}.
		Many such $e$-values have already been developed, constituting a rich source of post-hoc $p$-values.
		
		While $e$-values and $p$-values are often presented as entirely different paradigms, the above relationship reveals a deep connection.
		Indeed, it shows that $e$-values \emph{are} $p$-values under a stronger error guarantee, thinly veiled by a reciprocal operation: $e = 1/p$.
		This offers a very precise answer to the often-posed question of what $e$-values have to offer compared to traditional $p$-values.
		Moreover, this provides a decision-theoretic foundation for the $e$-value in the context of a binary hypothesis testing problem.
		In addition, as $p$-values are familiar to all statisticians, we believe this connection may make $e$-values more palatable to a wider audience, who may be hesitant to adopt a new paradigm without understanding what is gained compared to an existing paradigm.
		
		A downside of post-hoc $p$-values is that they are generally larger than traditional $p$-values.
		This is a direct consequence of the fact that the post-hoc error guarantee is stronger than the traditional error guarantee.
		
		Interestingly, there does not seem to be a unique notion of post-hoc power, as the traditional notion of power does not generalize.
		Indeed, we would naively like to maximize the rejection probability under the alternative at the selected data-dependent level.
		Moreover, we would like to use the smallest data-dependent level at which we reject: the post-hoc $p$-value.
		However, since we always reject at this level, there is no rejection probability left to maximize.
		
		The natural replacement of power is to make the post-hoc $p$-value `small' under the alternative.
		This comes with a lot of flexibility: we can choose whether we want a moderately small $p$-value with high probability or gamble for a tiny $p$-value with very small probability, depending on the application.
		We tackle this by using the ``Neyman-Pearson lemma for $e$-values'' recently derived in \citet{koning2024continuous}.
		We also recover the classical Neyman-Pearson lemma as a special case, replacing randomization by a rejection at a larger post-hoc level.
		
		We also include two technical sections.
		In Section \ref{sec:markov}, we show how our theory can be used to tighten Markov's inequality to a ``Markov's equality"; in fact, this was the original inspiration of this work.
		In Section \ref{sec:beyond_e-values}, we discuss the abstract theory underlying post-hoc testing.
	
		\subsection{Contributions to the literature}
			Preceding work on the post-hoc selection of a level may be found in \citet{katsevich2020simultaneous, xu2022post, wang2022false, grunwald2022beyond}.
			In particular, the fact that $e$-values yield a type of post-hoc valid decisions was also recently observed in multiple testing \citep{katsevich2020simultaneous, xu2022post, wang2022false}. 
			However, this work notably does not cover the necessity of $e$-values for this kind of post-hoc $\alpha$ testing, nor does it explore its use when testing a single hypothesis: it only considers error bounds in multiple testing.
			The closest precursor is the work of \citet{grunwald2022beyond}, who does connect $e$-values to the problem of testing with a data-dependent $\alpha$.
			However, he considers a very different perspective on post-hoc decisions, notably missing the connection to $p$-values as well as the necessity of $e$-values for post-hoc testing: we show every (non-dominated) post-hoc test is of the form $e \mapsto \mathbb{I}\{e \geq 1/\alpha\}$.
			In Appendix \ref{sec:grunwald} we discuss the connection to this work.
			
			On top of these differences, we bring five broad contributions to this literature.
			First, we derive a novel integrative framework of post-hoc testing that consolidates prior theory.
			Here, we build testing with data-dependent levels from the ground-up, starting from traditional hypothesis testing.
			Perhaps the key observation here is that one must \emph{choose} how to handle size distortions, and we motivate and focus on the option that leads to the $e$-value.
			Second, while the preceding literature presents the $e$-value and $p$-value as two opposing paradigms, we show that $e$-values \emph{are} $p$-values derived under a stronger data-dependent-level Type I error guarantee, thinly veiled by the reciprocal $e = 1/p$.
			This concretely answers the open question in the $e$-value literature of what $e$-values offer compared to $p$-values.
			Third, we show how to capture optimality in the context of post-hoc level testing, using a utility-based framework that yields a risk-reward trade-off for rejections across different levels.
			Fourth, we nest post-hoc hypothesis testing in a more general theory of evidence, beyond $e$-values.
			Finally, we connect post-hoc $\alpha$ testing to Markov's inequality, revealing a ``Markov's equality''.
			As concentration inequalities generally rely on Markov's inequality, we show how this may be used to produce `concentration equalities', which we apply to Ville's-type inequalities.
			
			In the appendix, we further explore what lies beyond a single testing problem, exploring post-hoc $\alpha$ sequential testing and multiple testing.
			Based on earlier versions of this manuscript, \citet{hartog2025family} recently continued work on the post-hoc Familywise Error Rate, and \citet{xu2025bringing} applied these ideas to general multiple testing problems.
			In addition, we also introduce a notion of double post-hoc $p$-values which are $p$-values that are post-hoc valid under the hypothesis and have a reciprocal that is post-hoc valid under the alternative.
			An example of such a double post-hoc $p$-value is a likelihood ratio and its composite generalization (the `numeraire $e$-value') recently introduced by \citet{larsson2024numeraire}.
			Similar double interpretations have been attributed to likelihood ratios before, in the context of Bayes factors \citep{jeffreys1935some, jeffreys1998theory, kass1995bayes}.
			
			Our work has recently been applied in \citet{hemerik2024choosing} to provide a solution to the danger of having multiple `standard' significance levels in a single field of literature.
			Combining traditional $p$-values by taking their harmonic mean was recently suggested by \citet{wilson2019harmonic}.
			Our post-hoc $p$-values seem deeply related to this operation, as their harmonic mean is still a post-hoc $p$-value.
								
	\section{Traditional hypothesis testing}			
		\subsection{The problem}
			Any statistical problem starts with observing some data.
			The goal of statistical inference is to learn properties of the process that generated this data.
			Hypothesis testing is a specific statistical inference problem.
			There, we formulate a hypothesis about the data generating process and test whether it is true.

			Unfortunately, we do not directly observe the data generating process: we only observe the data.
			This means that a test must somehow use the data to decide whether the hypothesis is true.
			We model such a test as a binary decision $\tau : \textnormal{data} \to \{0, 1\}$, where a $1$ means that our test rejects the hypothesis and $0$ that it does not reject the hypothesis.
						
			Making mistakes is inherent to hypothesis testing.
			To quantify such mistakes, it is customary to measure how often a test $\tau$ rejects our hypothesis when it is actually true.
			This probability of rejecting the hypothesis if it is true is also known as the \emph{size} of the test:
			\begin{align*}
				\textnormal{size}(\tau)
					:= \mathbb{P}(\tau = 1),
			\end{align*}
			sometimes also called the \emph{False Positive Rate} or \emph{Type I error}, where $\mathbb{P}$ denotes the probability under the hypothesis.\footnote{Our results and definitions extend to composite hypotheses, by replacing $\mathbb{P}$ with a supremum over the distributions that satisfy the hypothesis.}
			
			The standard strategy is to use a test with a very small size, so that if the hypothesis is true, then it would be implausible that the test rejects it.
			
		\subsection{Test functions}
			We typically want to use a test $\tau$ with a certain confidence guarantee expressed by a level $\alpha > 0$.
			In particular, say that a test is \emph{valid} at a level $\alpha$ if its probability of rejecting the hypothesis when it is true, the size, is at most $\alpha$:
			\begin{align*}
				\textnormal{size}(\tau)
					= \mathbb{P}(\tau = 1) \leq \alpha.
			\end{align*}
			It is common to call a test \emph{exactly} valid if this holds with equality.
			
			In practice, we often have access to an entire family or \emph{function} $\phi$ of tests, where each test $\phi(\alpha)$ is labeled by some level $\alpha > 0$.
			For example, it is common to have a test function $\phi$ of a threshold form $\phi(\alpha) = \mathbb{I}\{T > c(\alpha)\}$, which rejects the hypothesis when some \emph{test statistic} $T$ exceeds a \emph{critical value} $c(\alpha)$.
			By tuning the level $\alpha$, we can select a test of the desired level.
			
			We can extend the notion of a valid test to entire test functions, and call $\phi$ \emph{valid} if the size is below $\alpha$ for every $\alpha$:
			\begin{align}\label{eq:valid_original}
				\textnormal{size}(\phi(\alpha))  \leq \alpha,\quad \textnormal{ for all } \alpha.
			\end{align}
			This means that whichever $\alpha$ we pre-specify, the corresponding test $\phi(\alpha)$ is valid.
			
			It is typical to use a test function $\phi$ for which a smaller level $\alpha$ yields a more conservative test.
			For example, for a threshold-based test function $\mathbb{I}\{T > c(\alpha)\}$, the critical value typically increases as the level $\alpha$ decreases.
			More abstractly, we will assume throughout that our test functions are non-decreasing in $\alpha$, in the sense that if $\phi(\alpha^-)$ rejects then $\phi(\alpha^+)$ also rejects whenever $\alpha^+ \geq \alpha^-$.
			Moreover, we assume that given the data, there always exists some smallest $\alpha$ at which $\phi(\alpha)$ jumps from 0 to 1, eliminating test functions that equal 0 or 1 for every $\alpha$.
			A consequence of these assumptions is that $\phi$ is an indicator function in $\alpha$, as illustrated in Figure \ref{fig:test_family}.
			In the context of post-hoc $\alpha$ validity, the non-decreasing assumption is without loss of generality under the weak assumption that a user prefers rejecting for a smaller value of $\alpha$; see Remark \ref{rmk:non-decreasing}.
			
		\subsection{$p$-values}
			Under the specified conditions, a test function $\phi$ is completely summarized by the point at which the jump happens.
			This point is known as the $p$-value of $\phi$ \citep{lehmann2022testing}:
			\begin{align}\label{dfn:p-value}
				p = \min\{\alpha : \phi(\alpha) = 1\}.
			\end{align}
			All tests $\phi(\alpha)$ labeled by a level $\alpha < p$ do not reject, and all tests with a level $\alpha \geq p$ do reject.
			This means that for a $p$-value $p$, its corresponding test function $\phi$ can be written as $\alpha \mapsto \mathbb{I}\{p \leq \alpha\}$, where $\mathbb{I}\{p \leq \alpha\}$ denotes the indicator function of the event $p \leq \alpha$.
			
			A $p$-value is said to be \emph{valid} if its associated test function is.
			That is, its probability of being below $\alpha$ is bounded by $\alpha$:
			\begin{align}\label{eq:valid_p-value}
				\textnormal{size}(\mathbb{I}\{p \leq \alpha\})
					\equiv \mathbb{P}(p \leq \alpha) \leq \alpha,\quad \textnormal{ for all } \alpha.
			\end{align}

			\begin{figure}
				\centering
				\begin{tikzpicture}[scale=0.55]
				    \draw [<->,thick] (0,5) node (yaxis) [above] {$\phi(\alpha)$}
				        |- (5,0) node (xaxis) [right] {$\alpha$};
				
				    \draw[black, thick] (0,0) -- (3,0); 
				    \draw[black, thick] (3,4) -- (4.5,4); 
				    
				    \draw[black] (3,0) -- (3,-0.2); 
   					\node[below] at (3,-0.2) {$p$}; 
   					\draw[black, dashed] (3,0) -- (3,4); 
   					
				    \fill[white] (3,0) circle (2pt);
				    \fill[black] (3,4) circle (2pt);
				    \draw[black] (3,0) circle (2pt);
				    
				    \draw[black] (0,4) -- (-0.2,4); 
    				\node[below] at (-0.5, 4.5) {$1$}; 
				\end{tikzpicture}
				\caption{Realization of a test function and its $p$-value.}
				\label{fig:test_family}
			\end{figure}
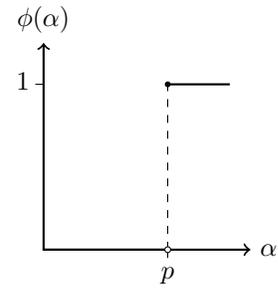
			
			\begin{rmk}
				Sometimes, \eqref{eq:valid_p-value} is used as the definition of a $p$-value.
				Our results still go through if this definition is taken as the starting point.
			\end{rmk}
			
		\subsection{Relative size distortion}
			If the size and the level $\alpha$ do not match, we speak of a \emph{size distortion}.
			At a given level $\alpha$, we can measure the size distortion by the ratio between the size and the level:
			\begin{align}\label{dfn:size_distortion}
				\frac{\textnormal{size}(\phi(\alpha))}{\alpha}
					\equiv \frac{\mathbb{P}(\phi(\alpha) = 1)}{\alpha}
					\equiv  \frac{\mathbb{P}(p \leq \alpha)}{\alpha}.
			\end{align}
			A test (function) or $p$-value is valid at $\alpha$ if its size distortion is at most 1.
			
			This relative measure of size distortion conveniently scales with the value of $\alpha$.
			For example, suppose that $\alpha = .01$ and $\textnormal{size} = .02$, then the size distortion equals $.02 / .01 = 2$.
			This is much larger than the desired value 1.
			At the same time, if  $\alpha = .10$ and $\textnormal{size} = .11$, the size distortion is much smaller: $.11 / .10 = 1.1$.
			
			\begin{rmk}\label{rmk:discrepancy}
				We stress that this way of measuring the size distortion is a choice.
				For example, we could alternatively measure the size distortion with $\textnormal{size}(\phi(\alpha))- \alpha$.
				However, this difference does not scale well with the value of $\alpha$.
				Indeed, the level $\alpha = .01$ and $\textnormal{size} = .02$ would then yield the same size distortion as $\alpha = .10$ and $\textnormal{size} = .11$.
				In practice, we believe the former is usually considered much more problematic, as expressed in using our relative notion of size distortion.
				For completeness, we also develop a notion of post-hoc $\alpha$ testing starting from this notion of size distortion in Example \ref{exm:harmonic}.
			\end{rmk}
			
	\section{Testing with data-dependent $\alpha$}
	
		\subsection{Generalizing size to data-dependent $\alpha$}
			Valid test functions and valid $p$-values are convenient, because they produce a valid test for each desired level $\alpha$.
			However, this ease of use also facilitates misuse: it becomes tempting to consider a data-dependent level $\widetilde{\alpha}$.
			For example, an analyst may secretly first look at the $p$-value, and then conveniently select the level based on this $p$-value.
			Unfortunately, such a procedure may have a size much larger than the selected level, as we illustrate in Example~\ref{exm:decreasing_alpha}.
			
			To study such a data-dependent selection of the level, we generalize the notion of size to data-dependent levels.
			For a data-dependent level, the natural definition of size is the conditional probability to falsely reject the hypothesis, given that the data-dependent level $\widetilde{\alpha}$ equals $a$:
			\begin{align*}
				\mathbb{P}(\phi(\widetilde{\alpha}) = 1 \mid \widetilde{\alpha} = a) \equiv \mathbb{P}(p \leq \widetilde{\alpha} \mid \widetilde{\alpha} = a).
			\end{align*}
			This can be interpreted as the actual rejection probability under the hypothesis if a rejection is claimed at level $a$, when using the data-dependent level $\widetilde{\alpha}$.
			
			For a pre-specified level $\widetilde{\alpha} = \alpha$, this simply reduces to the original definition of the size: $\mathbb{P}(\phi(\alpha) = 1 \mid \alpha = a) = \mathbb{P}(\phi(a) = 1) \equiv \mathbb{P}(p \leq a)$.
			
			\begin{exm}[$\alpha$-hacking]\label{exm:decreasing_alpha}
				Let us consider a data-dependent level $\widetilde{\alpha}$, which claims significance at the 1\% level if $p \leq .01$, but otherwise claims significance at the 5\% level:
				\begin{align*}
					\widetilde{\alpha} = 
					\begin{cases}
						.01, \text{ if $p \leq .01$}, \\
						.05, \text{ if $p > .01$}.
					\end{cases}
				\end{align*}	
				This mimics a type of `$\alpha$-hacking', as $\widetilde{\alpha}$ is conveniently lowered to .01 if the $p$-value happens to fall below it.
				
				Suppose we use an exactly valid test function $\phi$, which means that $p \sim \textnormal{Unif}(0, 1]$.
				Then, the size at $a = .01$ equals
				\begin{align*}
					\mathbb{P}(p \leq \widetilde{\alpha} \mid \widetilde{\alpha} = .01) 
						= \mathbb{P}(p \leq .01 \mid p \leq .01) 
						= 1,
				\end{align*}
				since we only choose $\widetilde{\alpha} = .01$ if our $p$-value falls below $.01$.
				This size of $1$ is much larger than the selected level $a = .01$.
				Indeed, the size distortion at $a = .01$ equals $1 / .01 = 100$.
				
				At $a = .05$ the size equals 
				\begin{align*}
					\mathbb{P}(p \leq \widetilde{\alpha} \mid \widetilde{\alpha} = .05)
						= \mathbb{P}(p \leq .05 \mid p > .01)
						= 4 / 99
						\approx .04.
				\end{align*}
				Interestingly, the size here is roughly $.04$, which is lower than the selected level $a = .05$, so that the size distortion is below 1: $(4/99) / .05 = 80/99 \approx .81$.
			\end{exm}
			
		\subsection{Validity for a data-dependent level $\alpha$}\label{sec:max_expected}
			As seen in Example \ref{exm:decreasing_alpha}, the size distortion caused by using a data-dependent level $\widetilde{\alpha}$ may be above or below 1, depending on the realization of the data-dependent level $\widetilde{\alpha}$.
			In order to define a notion of validity of a test function $\phi$ or $p$-value $p$ for a data-dependent level $\widetilde{\alpha}$, we \emph{must} choose how we weigh these different distortions.
			This choice is overlooked in preceding work on testing with data-dependent $\widetilde{\alpha}$.
			
			While we discuss more options in Section \ref{sec:beyond_e-values}, we focus on two options here: controlling the maximum size distortion and the expected (relative) size distortion.
			Both these options are a generalization of the notion of size distortion for pre-specified levels $\alpha$.
			Indeed, if $\widetilde{\alpha}$ can only take on a single value $\alpha$, then both the maximum and expected size distortion reduce to \eqref{dfn:size_distortion}. \\
					
			\noindent\textbf{Maximum size distortion}.
			Perhaps the most obvious choice would be to demand that the size at $a$ is below $a$ for all $a$, or at least for all $a$ in the support of $\widetilde{\alpha}$.
			That is, we say that a test function $\phi$ or $p$-value is valid for a data-dependent level $\widetilde{\alpha}$ under the maximum size distortion if:
			\begin{align*}
				\mathbb{P}(\phi(\widetilde{\alpha}) = 1 \mid \widetilde{\alpha} = a) 
					\equiv \mathbb{P}(p \leq \widetilde{\alpha} \mid \widetilde{\alpha} = a) 
					\leq a,
			\end{align*}
			for all $a$ in the support of $\widetilde{\alpha}$.
			Formulated in terms of the size distortion, this is equivalent to demanding that the \emph{maximum size distortion} over $a$ is at most 1:
			\begin{align*}
				\sup_{a \in \text{support}(\widetilde{\alpha})} \frac{\mathbb{P}(\phi(\widetilde{\alpha}) = 1 \mid \widetilde{\alpha} = a)}{a} \leq 1,
			\end{align*}
			where $\text{support}(\widetilde{\alpha})$ contains all the values $\widetilde{\alpha}$ may take.
			This is also known as its essential supremum. \\
			
			While control over the maximum size distortion may superficially seem ideal, it is too strict to develop a meaningful theory of testing with data-dependent $\alpha$.
			Indeed, it even bans clearly conservative choices of $\widetilde{\alpha}$ (see Example \ref{exm:conservative}).
			Moreover, it suffers from undesirable discontinuities (see Example \ref{exm:fragility}), due to the fact that the supremum over the support of a random variable is a very fragile operation: it can strongly depend on events with near-zero probability.
			For completeness, we study what post-hoc $\alpha$ hypothesis testing would look like with this condition as a starting point in Appendix \ref{apn:post-hoc_max}.
			There, we indeed find it is equivalent to requiring $p \geq 1$ or $\phi(\alpha) = 0$ for all $0 \leq \alpha \leq 1$, leaving no room for a useful theory. \\	
		
			\noindent\textbf{Expected size distortion}.
			As the maximum size distortion is problematic, we propose to define validity through the \emph{expected size distortion}.
			This permits some distortions, as long as they are controlled in expectation.
			
			\begin{dfn}[Validity for data-dependent level $\widetilde{\alpha}$]
				A test function $\phi$ or $p$-value $p$ is valid for a data-dependent level $\widetilde{\alpha}$, if the expected size distortion is at most 1:
				\begin{align*}
					\mathbb{E}_{\widetilde{\alpha}}\left[\frac{\mathbb{P}(\phi(\widetilde{\alpha}) = 1 \mid \widetilde{\alpha})}{\widetilde{\alpha}}\right]
					\equiv 
					\mathbb{E}_{\widetilde{\alpha}}\left[\frac{\mathbb{P}(p \leq \widetilde{\alpha} \mid \widetilde{\alpha})}{\widetilde{\alpha}}\right]
					\leq 1.
				\end{align*}
			\end{dfn}
			The interpretation is that if we use the data-dependent level $\widetilde{\alpha}$ many times, then on average the size distortion is below 1.
			
			The expected size distortion can actually be very concisely written as the expectation of $\phi(\widetilde{\alpha}) / \widetilde{\alpha} \equiv \mathbb{I}\{p \leq \widetilde{\alpha}\} / \widetilde{\alpha}$, as shown in Proposition \ref{prp:compact}.
			While less interpretable, this expression is mathematically more convenient to work with.
			
			\begin{prp}\label{prp:compact}
				We have
				\begin{align*}
					 \mathbb{E}_{\widetilde{\alpha}}\left[\frac{\mathbb{P}(\phi(\widetilde{\alpha}) = 1 \mid \widetilde{\alpha})}{\widetilde{\alpha}}\right]
					 	= \mathbb{E}\left[\frac{\phi(\widetilde{\alpha})}{\widetilde{\alpha}}\right]
					 	\equiv \mathbb{E}\left[\frac{\mathbb{I}\{p \leq \widetilde{\alpha}\}}{\widetilde{\alpha}}\right].
				\end{align*}
			\end{prp}
			\begin{proof}
				This follows from the fact that $\mathbb{P}(\phi(\widetilde{\alpha}) = 1 \mid \widetilde{\alpha})/\widetilde{\alpha} = \mathbb{E}(\phi(\widetilde{\alpha})\mid \widetilde{\alpha})/\widetilde{\alpha} = \mathbb{E}(\phi(\widetilde{\alpha})/\widetilde{\alpha}  \mid \widetilde{\alpha})$, and then taking the expectation over $\widetilde{\alpha}$.
			\end{proof}

		\subsection{Examples}
			We have constructed a variety of examples in which we illustrate the expected and maximum size distortion.
			\begin{exm}[A conservative data-dependent level $\widetilde{\alpha}$]\label{exm:conservative}
				In this example, we show that control of the maximum size distortion even bans conservative choices of the data-dependent level $\widetilde{\alpha}$.
				In particular, suppose that instead of a fixed $\alpha = .01$, we choose a data-dependent $\widetilde{\alpha}$ that reports a rejection at a conservatively large level of $.02$ whenever $p \leq .01$, and otherwise equals $0.01$:
				\begin{align*}
					\widetilde{\alpha}
						=
						\begin{cases}
							.02, &\text{ if } p \leq .01, \\
							.01, &\text{ if } p > .01.
						\end{cases}
				\end{align*}
				Regardless of the distribution of the $p$-value, the size at $.02$ equals $\mathbb{P}(p \leq \widetilde{\alpha} \mid \widetilde{\alpha} = .02) = 1$ and at $.01$ it equals $\mathbb{P}(p \leq \widetilde{\alpha} \mid \widetilde{\alpha} = .01) = 0$.
				As a consequence, the maximum size distortion is $1 / .02 = 50 \gg 1$, suggesting the $\widetilde{\alpha}$ is too liberal, despite the fact that this data-dependent $\widetilde{\alpha}$ is clearly more conservative than the fixed level $.01$.
				
				On the other hand, for an exact $p$-value, the expected size distortion equals $.01 \times 50 + .99 \times 0 = .5 \leq 1$, which properly expresses that this $\widetilde{\alpha}$ is conservative.
			\end{exm}
			
			\begin{exm}[Discontinuity of maximum size distortion]\label{exm:fragility}
				The point of this example is to show that even if a data-dependent level $\widetilde{\alpha}$ is `close to ok', the maximum size distortion may still be large.
				Suppose that, for a given constant $c > 0$, we choose the data-dependent level 
				\begin{align*}
					\widetilde{\alpha}_c
						=
						\begin{cases}
							c, &\text{ if } p \leq c, \\
							.05, &\text{ if } p > c.
						\end{cases}
				\end{align*}
				If $c = .05$, then we are in the data-independent setting with a fixed $\alpha = .05$.
				An exact $p$-value or test function is clearly valid for such a data-independent choice of $\alpha$.
				
				However, let us now consider choosing a constant $c < .05$ that is close to $.05$.
				Then, we find that the maximum size distortion is large.
				In particular, the limit as $c \nearrow .05$ does not coincide with the choice $c = .05$:
				\begin{align*}
					\lim_{c \nearrow .05} \sup_{a \in \textnormal{support}(\widetilde{\alpha}_c)}
					\frac{\mathbb{P}(\phi(\widetilde{\alpha}_c) = 1 \mid \widetilde{\alpha}_c = a)}{a}
						= 20,
				\end{align*}
				because $\mathbb{P}(\phi(\widetilde{\alpha}_c) = 1 \mid \widetilde{\alpha}_c = c) \equiv \mathbb{P}(p \leq c \mid p \leq c) = 1$, for all $c < .05$.
				This has to do with the fact that $\widetilde{\alpha}_c$ has two points of support for every $c < .05$, namely $c$ and $.05$.
				
				On the other hand, the limit of the expected size distortion as $c \nearrow .05$ equals 1: the same value as if we had set $c = .05$.
			\end{exm}
			
			\begin{exm}[Size distortion when $\alpha$-hacking]\label{exm:decreasing_alpha_continued}
				Continuing from Example \ref{exm:decreasing_alpha}, the maximum size distortion equals 100.
				However, we also see that the size distortion at $a = .05$ is below 1.
				This averages out to an expected size distortion of $.01 \times 100 + .99 \times 80/99 =  1.8$.		
				This shows that with an exactly valid test function, this type of $\alpha$-hacking-like $\widetilde{\alpha}$ is problematic both in terms of the maximum and expected size distortion.
			\end{exm}
			
			\begin{exm}[Rejecting at level $p$]\label{exm:rej_level_p}
				An extension of Example \ref{exm:decreasing_alpha} is the extreme form of $\alpha$-hacking, where we use the smallest level $\widetilde{\alpha}$ for which our test function rejects.
				That is, we claim rejection at level $\widetilde{\alpha} = p$.
				
				In this case we always reject, so that the size distortion equals $1 / a$, for each possible realization $a$ of $\widetilde{\alpha}$.
				As a consequence, both the maximum and expected size distortion are unbounded: $\sup_a 1/a = \infty$ and $\mathbb{E}\, 1/p = \log(1) - \log(0) = \infty$.
				This illustrates that `rejecting at level $p$' is indeed highly problematic with traditional valid $p$-values and test functions.
			\end{exm}
			
			\begin{exm}[Valid `$\alpha$-hacking']\label{exm:valid_hacking}
				The point of this example is to show that there exist settings in which a test function $\phi$ controls the expected but not the maximum size distortion.
				Suppose we again consider the $\widetilde{\alpha}$ from Example \ref{exm:decreasing_alpha}.
				But now we use a conservative test function $\phi(\alpha) = \mathbb{I}\{p \leq \alpha\}$ based on a valid $p$-value that is not uniform on $(0, 1]$:
				\begin{align*}
					p \sim
						\begin{cases}
							\textnormal{Unif}(0, 1), &\text{w.p. } 1/2, \\
							1, &\text{w.p. } 1/2.
						\end{cases}
				\end{align*}
				Then, the expected size distortion equals .9.
				This shows that by using a more conservative test function we can control the expected size distortion.
				
				On the other hand, the maximum size distortion equals 100 at $a = .01$, since $\mathbb{P}(\phi(\widetilde{\alpha}) = 1 \mid \widetilde{\alpha} = .01) = 1 > .01$.
			\end{exm}
		
		\section{Post-hoc $\alpha$ hypothesis testing}\label{sec:post-hoc_testing}
			In the previous section, we defined a notion of validity of a test function and $p$-value for a \emph{single} data-dependent level $\widetilde{\alpha}$.
			In this section, we discuss test functions and $p$-values that are valid for \emph{every} data-dependent choice of the level $\widetilde{\alpha}$.
			That is, the level $\alpha$ can be chosen \emph{post-hoc}.
			
			\begin{dfn}[Post-hoc $\alpha$ validity]
				We say that a test function $\phi$ or $p$-value is post-hoc ($\alpha$ valid) if its expected size distortion is at most 1 for every data-dependent level $\widetilde{\alpha}$:
				\begin{align}\label{dfn:pha_validity}
					\sup_{\widetilde{\alpha}}\mathbb{E}\left[\frac{\mathbb{P}(\phi(\widetilde{\alpha}) = 1 \mid \widetilde{\alpha})}{\widetilde{\alpha}}\right]
					\equiv \sup_{\widetilde{\alpha}}\mathbb{E}\left[\frac{\mathbb{P}(p \leq \widetilde{\alpha} \mid \widetilde{\alpha})}{\widetilde{\alpha}}\right]
					\leq 1,
				\end{align}
				where the supremum is over every data-dependent level (random variable) $\widetilde{\alpha}$.
			\end{dfn}
			
			If we use a post-hoc $\alpha$ valid test function $\phi$, then whatever data-dependent level $\widetilde{\alpha}$ we use, we still have the guarantee that the expected size distortion is at most 1 in expectation.
			
			Post-hoc $\alpha$ validity is a stronger guarantee than validity, as captured in Theorem \ref{thm:post-hoc_is_valid}.
			This means that if we use a post-hoc valid test function $\phi$, we also have the original guarantee that the test $\phi(\alpha)$ has a size smaller than $\alpha$ for any pre-specified level $\alpha$.
			\begin{thm}\label{thm:post-hoc_is_valid}
				A post-hoc valid test function is valid.
				Equivalently, a post-hoc $p$-value is also valid.
			\end{thm}
			\begin{proof}
				Note that \eqref{eq:valid_original} can be equivalently written as 
				\begin{align*}
					\sup_{\alpha} \mathbb{P}(\phi(\alpha) = 1) / \alpha
						= \sup_{\alpha} \mathbb{E}(\phi(\alpha) / \alpha)
						\leq 1.
				\end{align*}
				The result then follows from the observation that
				\begin{align*}
					\sup_{\alpha} \mathbb{E}(\phi(\alpha) / \alpha) 
						&\leq \mathbb{E}(\sup_{\alpha} \phi(\alpha) / \alpha)
						= \sup_{\widetilde{\alpha}} \mathbb{E}\left[\frac{\phi(\widetilde{\alpha})}{\widetilde{\alpha}}\right],
				\end{align*}
				and invoking Proposition \ref{prp:compact}.
			\end{proof}

		\subsection{Post-hoc $p$-values: a simplification}
			While the expression of the definition of post-hoc $\alpha$ validity in \eqref{dfn:pha_validity} seems complicated to work with, Theorem \ref{thm:php} yields an exceptionally clean representation of the post-hoc $p$-value.
			The key insight of the proof is that the $p$-value itself \emph{is} the smallest data-dependent level for which we reject.
			
			\begin{thm}\label{thm:php}
				$p$ is a post-hoc $p$-value if and only if
				\begin{align*}
					\mathbb{E}\, 1/p \leq 1.
				\end{align*}
			\end{thm}
			\begin{proof}
				By Proposition \ref{prp:compact},
				\begin{align*}
					\sup_{\widetilde{\alpha}}\mathbb{E}_{\widetilde{\alpha}}\left[\frac{\mathbb{P}(p \leq \widetilde{\alpha} \mid \widetilde{\alpha})}{\widetilde{\alpha}}\right] = 
					\sup_{\widetilde{\alpha}}\mathbb{E}\left[\frac{\mathbb{I}\{p \leq \widetilde{\alpha}\}}{\widetilde{\alpha}}\right].
				\end{align*}
				This latter term equals
				\begin{align*}
					\mathbb{E}\left[\sup_{\alpha}\frac{\mathbb{I}\{p \leq \alpha\}}{\alpha}\right],
				\end{align*}
				as the supremum in the expectation can arbitrarily depend on the data.
				Evaluating this supremum yields
				\begin{align*}
					\mathbb{E}\left[\frac{\mathbb{I}\{p \leq p\}}{p}\right]
						= \mathbb{E}\left[\frac{1}{p}\right].
				\end{align*}
			\end{proof}

			Theorem \ref{thm:php} shows that to find a post-hoc $p$-value, we merely need to find a non-negative random variable with expectation at most 1, and take its reciprocal.
			A post-hoc valid test function can then be recovered as $\phi(\alpha) = \mathbb{I}\{p \leq \alpha\}$.
			Conversely, starting with a post-hoc valid test function $\phi$, a post-hoc $p$-value can be constructed using $p = \inf\{\alpha : \phi(\alpha) = 1\}$.
			
			Post-hoc $p$-values inherit two useful merging properties from the definition of an expectation, which we capture in Proposition \ref{prp:prod_mult_non-random} and \ref{prp:harmonic_mult_non-random}.
			\begin{prp}\label{prp:prod_mult_non-random}
				A product of independent post-hoc $p$-values is a post-hoc $p$-value.
			\end{prp}
			\begin{prp}\label{prp:harmonic_mult_non-random}
				A weighted harmonic mean of post-hoc $p$-values is a post-hoc $p$-value.
			\end{prp}

			\begin{rmk}[Domain of post-hoc $p$-values]
				We should expect `useful' post-hoc $p$-values to sometimes take value above 1 under the null hypothesis.
				Indeed, as their reciprocal is at most 1 in expectation, a post-hoc $p$-value that is only supported on $(0, 1]$ must almost surely equal 1 under the null hypothesis.
				Of course, we want a post-hoc $p$-value to be much smaller than 1 under the alternative hypothesis.
				
				This may be unsettling at first sight as valid traditional $p$-values typically don't take values above 1.
				However, we stress that the $p$-value corresponds to the level, and not the size of the test.
				Indeed, the size of a test is at most 1 since it is a probability, but the level has no such restrictions.
			\end{rmk}

			\begin{rmk}[Non-decreasing test function without loss of generality]\label{rmk:non-decreasing}
				Note that post-hoc $\alpha$ validity only restricts the behavior of a test function $\phi$ at the smallest level at which we reject: the $p$-value.
				This means that in the context of post-hoc $\alpha$ validity, the restriction to non-decreasing test functions is effectively without loss of generality: if we intend to make the most powerful claim possible, then we might as well reject for larger levels, too.
				That is, every post-hoc valid test family that is not a non-decreasing test function is dominated by one that is.
			\end{rmk}

	\section{Post-hoc $\alpha$ power}\label{sec:power}
		\subsection{Generalizing traditional power?}
			It remains to discuss what should constitute a \emph{good} post-hoc $p$-value or post-hoc valid test function.
			Indeed, the constant 1 is also a post-hoc $p$-value, but completely uninformative.
			Instead, we would like to use a post-hoc $p$-value that is `small' if our hypothesis is false: if the alternative hypothesis holds.
			
			In traditional hypothesis testing, it is common to choose the test that maximizes the rejection probability (power) under the alternative hypothesis among tests that are valid at the pre-specified level $\alpha$.
			We can certainly also maximize the power at a pre-specified $\alpha$ under the restriction that the test function it comes from is post-hoc valid, which we explore in Section \ref{sec:classical_power}.
			However, we would preferably define a post-hoc notion of power that performs well at the data-dependent level $p$.
			Curiously, this is not possible because we always reject the hypothesis at the data-dependent level $p$, by definition of a $p$-value.
			Hence, we instead aim to `minimize' the $p$-value under the alternative, with the goal to provide the strongest possible certificate on our rejection decision.
			
		\subsection{Expected-utility}
			To describe optimal post-hoc $p$-values, we use the connection to $e$-values and leverage the ``Neyman-Pearson lemma for $e$-values" recently introduced in \citet{koning2024continuous}.
			
			In particular, we consider a utility function $U$ that defines our preference for each possible realization of the post-hoc $p$-value.
			To obtain the cleanest-looking results, we define the utility function on the $1/p$-scale, maximizing
			\begin{align*}
				\mathbb{E}^{\mathbb{Q}}\left[U(1/p)\right],
			\end{align*}
			over post-hoc valid $p$-values, where $\mathbb{Q}$ is some alternative distribution.
			For example, for $U(1/p) = \log(1/p) = -\log(p)$ we value a smaller $p$-value linearly in its order of magnitude: $p = 0.01$ has twice the utility of $p = 0.1$.
			
			In Theorem \ref{thm:optimality}, we show how the Neyman-Pearson lemma for $e$-values of \citet{koning2024continuous} connects to post-hoc testing, by using it to characterize expected-utility optimal post-hoc $p$-values.
			Here, we let $f_{\mathbb{P}}$ and $f_{\mathbb{Q}}$ denote the density under the null and alternative, respectively, with respect to some dominating measure (which always exists).
			
			\begin{thm}[\citet{koning2024continuous}]\label{thm:optimality}
				Let $U : [0, \infty] \to [-\infty, \infty]$ be upper-semicontinuous, concave and non-decreasing.
				If $p^*$ is optimal then 
				\begin{align}\label{eq:utility_optimal}
					\lambda \frac{f_{\mathbb{P}}}{f_{\mathbb{Q}}} \in \partial U\left(1/p^*\right), \quad \mathbb{Q}\textnormal{-almost surely},
				\end{align}
				for some normalization constant $\lambda \geq 0$.
			\end{thm}
			
			\begin{cor}[Differentiable utility]
				If $U$ is differentiable, then an optimizer $p^*$ satisfies
				\begin{align*}
					1/p^* = (U')^{-1}\left(\lambda f_{\mathbb{P}} / f_{\mathbb{Q}}\right), \quad \mathbb{Q}\textnormal{-almost surely},
				\end{align*}	
				where $(U')^{-1}(y) = \inf\{x \geq 0 : U'(x) \leq y\}$ is the generalized inverse.
			\end{cor}
			
			\begin{cor}[Log-optimal]\label{cor:log_optimal}
				If $U = \log$, then $p^* = f_{\mathbb{P}} / f_{\mathbb{Q}}$.
			\end{cor}

			A general condition for an optimizer to exist is that a normalization constant $\lambda$ exists such that a solution $p_\lambda$ to \eqref{eq:utility_optimal} satisfies $\mathbb{E}^{\mathbb{P}}[1/p_\lambda] = 1$ or $\lambda = 0$ \citep{koning2024continuous}.
			Some sufficient conditions are: (1) the sample space is finite, (2) $U'(x)x$ is bounded from above, (3) $\{U(1/p) : \mathbb{E}^{\mathbb{P}}[1/p] \leq 1\}$ is uniformly integrable, (4) $U$ is bounded from above (implies (2) if differentiable and (3)).
			
			\begin{exm}[Log-optimal post-hoc test function]\label{exm:log_optimal}
				Let us consider $U = \log$, so that $p^* = d\mathcal{N}(0, 1) / d\mathcal{N}(1, 1)$ by Corollary \ref{cor:log_optimal}.
				Its test function equals
				\begin{align}\label{php_LR_norm}
					\phi(\alpha) 
						=  \mathbb{I}\left\{\frac{d\mathcal{N}(0, 1)}{d\mathcal{N}(1, 1)}(X) \leq \alpha\right\}
						= \mathbb{I}&\left\{\frac{d\mathcal{N}(1, 1)}{d\mathcal{N}(0, 1)}(X) \geq 1/\alpha\right\}.
				\end{align}
				
				The representation in \eqref{php_LR_norm} permits a clean comparison to the likelihood ratio test.
				At any pre-specified level $\alpha$, this test function yields a lower power than the likelihood ratio test. 
				For example, at $\alpha = .05$, the critical value of the likelihood ratio roughly equals 3.14, whereas the same statistic is compared in \eqref{php_LR_norm} with the much larger critical value $1/\alpha = 20$.
				
				Of course, the benefit of a post-hoc $\alpha$ hypothesis test is that we need not pre-specify $\alpha$.
			\end{exm}
			
		\subsection{Nesting classical power}\label{sec:classical_power}
			An interesting question is what happens if we consider the post-hoc test function $\phi$ that maximizes the power at a pre-specified level $\alpha^*$.
			As we only care about its behavior at $\alpha^*$, its form is actually easy to predict.
			Indeed, given the most powerful test $\tau^*$ at level $\alpha^*$, we have $\phi(\alpha) = 0$ for $\alpha < \alpha^*$ and $\phi(\alpha) = \tau^*$ for $\alpha \geq \alpha^*$.
			The corresponding $p$-value is $p^* = \alpha^*$ if $\tau^* = 1$ and $p^* = \infty$ if $\tau^* = 0$.
			
			In a continuous setting, $\tau^*$ is the likelihood ratio test, according to the classical Neyman-Pearson lemma.
			In a general setting, this is only true if we admit randomized decisions, which we do not allow here (see Appendix \ref{sec:randomized} for randomized post-hoc testing).
			Surprisingly, we may still recover the classical Neyman-Pearson lemma by \emph{replacing randomization by rejection at a level larger than $\alpha^*$}, using the ``Neyman-Pearson utility function $U(x) = x \wedge 1/\alpha^*$.
			This utility function expresses that we do not care for rejections at levels smaller than $\alpha^*$.
			We cover this result in Corollary \ref{cor:NP}.
			
			\begin{cor}[Post-hoc level Neyman-Pearson]\label{cor:NP}
				If $U(x) = x \wedge 1/\alpha^*$ then an optimizer exists and satisfies
				\begin{align*}
					p^*
						=
						\begin{cases}
							\alpha^* &\textnormal{ if } f_{\mathbb{P}} / f_{\mathbb{Q}} < c_\alpha^*, \\
							k &\textnormal{ if } f_{\mathbb{P}} / f_{\mathbb{Q}} = c_\alpha^*, \\
							\infty &\textnormal{ if } f_{\mathbb{P}} / f_{\mathbb{Q}} > c_\alpha^*, \\
						\end{cases}
				\end{align*}
				for some constants $k \in [\alpha^*, \infty]$, $c_\alpha^* \geq 0$.
			\end{cor}

	\section{Connection to $e$-values}\label{sec:redefine_e-values}
		The reciprocal $1/p$ of a post-hoc $p$-value is a non-negative random variable with an expectation at most 1.
		A non-negative random variable with expectation at most 1 is also known as an $e$-value, which is a recently popularized statistical object \citep{howard2021time, shafer2021testing, vovk2021values, grunwald2023safe, ramdas2023gametheoretic}.
		
		\begin{dfn}[Original]\label{dfn:e-values}
			$e$ is a valid $e$-value if $\mathbb{E}\, e \leq 1$.
		\end{dfn}
		
		With $e$-values, it is standard to use the test function $\alpha \mapsto \mathbb{I}\{e \geq 1/\alpha\}$, which is valid due to Markov's inequality.
		For the post-hoc $p$-value $p = 1/e$, this test function simply equals $\alpha \mapsto \mathbb{I}\{p \leq \alpha\}$.
		Based on this link, our Theorem \ref{thm:php} shows this $e$-value-based test function is not just valid, but also post-hoc valid.
		Importantly, it also implies its converse: \emph{any post-hoc $\alpha$ valid test function is of the form} $\alpha \mapsto \mathbb{I}\{e \geq 1/\alpha\}$.
		
		A consequence is that the literature on $e$-values has unknowingly been studying post-hoc $\alpha$ hypothesis testing and post-hoc $p$-values.
		Indeed, many post-hoc $p$-values have already been developed, just thinly disguised by the operation $1/\cdot$.
						
	\section{Beyond e-values}\label{sec:beyond_e-values}
		This section is intended for readers who are interested in the abstract theory underlying post-hoc testing and $e$-values.
		In the preceding, we focused on a specific (arguably reasonable) way to measure the size distortion that leads to the $e$-value, mentioning other options in passing.
		In this section, we study a more general framework that nests such alternative choices, leading us beyond the $e$-value.
				
		\subsection{Abstract evidence}
			We start by abstractly quantifying evidence.
			For this purpose, we introduce an ``evidence space'' $\mathcal{D}$, which is a decision space that is totally ordered with respect to some binary order relation $\precsim$.
			Here, we interpret `larger' values as indicating more evidence than smaller values.
			We assume that $\mathcal{D}$ has a bottom element  $``0" := \inf \mathcal{D}$ and a top element $``\infty" := \sup \mathcal{D}$.
			
			In the context of classical hypothesis testing, we may think of $\mathcal{D}$ to contain the decisions to reject the hypothesis at different significance levels:
			\begin{align*}
				\mathcal{D}
					= \{\textnormal{not reject}, \textnormal{reject at } \alpha_1, \textnormal{reject at } \alpha_2, \dots\},
			\end{align*}
			where we couple the bottom element ``0'' to the decision not to reject.
			The order relation $\precsim$ then captures the idea that a rejection at a smaller level is a stronger claim: ``reject at level 0.05'' $\precsim$ ``reject at level 0.01''.
		
			Abstracting the notion of both a $p$-value and an $e$-value, we introduce an \emph{evidence variable}
			\begin{align*}
				\varepsilon : \mathcal{X} \to \mathcal{D},
			\end{align*}
			where $\mathcal{X}$ is our sample space.
			We abstractly define a test $\phi(d)$, $d \in \mathcal{D}$, as a `binary' evidence variable
			\begin{align*}
				\phi(d) : \mathcal{X} \to \{``0", d\},
			\end{align*}
			returning either no evidence ``0" or $d$ evidence.
			We defer discussions of measurability to Appendix \ref{sec:measurability_conditioning}.
		
		\subsection{Abstract post-hoc level testing}\label{sec:post-hoc_abstract}
			To discuss post-hoc testing, we assume $\mathcal{D}$ is (Dedekind) complete, meaning that every subset has a supremum and infimum in $\mathcal{D}$ element.
			For a given test function $\phi$, we then define its post-hoc evidence variable as $\varepsilon_\phi := \sup_{d \in \mathcal{D}} \phi(d)$.
			This returns the strongest evidence returned by any of the tests $\phi(d)$, $d \in \mathcal{D}$.
			
			This already yields the core result underlying post-hoc level testing.
			
			\begin{thm}\label{thm:post-hoc_abstract}
				For a given test function $\phi$, $\varepsilon_\phi = \sup_{d \in \mathcal{D}} \phi(d)$ is an evidence variable.
				Every evidence variable $\varepsilon$ is the post-hoc evidence variable of the test function
				\begin{align*}
					\phi_\varepsilon(d)
						:=
						\begin{cases}
							d, &\textnormal{ if } d \precsim \varepsilon, \\
							``0", &\textnormal{ otherwise}.
						\end{cases}
				\end{align*}
			\end{thm}
			\begin{proof}
				For the first claim, since $\mathcal{D}$ is Dedekind complete, $\varepsilon_\phi = \sup_{d \in \mathcal{D}} \phi(d)$ is $\mathcal{D}$-valued and hence an evidence variable.

				For the second claim, note that $\{\phi_\varepsilon(d) : d \in \mathcal{D}\} = \{d \in \mathcal{D} : d \precsim \varepsilon\}$.
				Hence, $\sup_{d \in \mathcal{D}} \phi_\varepsilon(d) = \sup\{d \in \mathcal{D} : d \precsim \varepsilon\} = \varepsilon$.
			\end{proof}
			
			\begin{rmk}
				The supremum of any collection of evidence variables is an evidence variable.	
			\end{rmk}

		\subsection{Certainty equivalence and validity}
			To define an abstract notion of validity, we introduce a certainty equivalent $\rho : \Delta \to \mathcal{D}$, where $\Delta = \mathcal{D}^{\mathcal{X}}$ denotes the space of evidence variables.
			Such a certainty equivalent may be interpreted to return an amount of evidence $d \in \mathcal{D}$ that is of `equivalent value' as $\varepsilon$.
			
			A first axiom we assume $\rho$ to satisfy is idempotence: if $\varepsilon_d \equiv d$ then $\rho(\varepsilon_d) = d$.
			This calibrates $\rho$ to certain outcomes.
			Moreover, we assume monotonicity:  $\varepsilon^-(x) \precsim \varepsilon^+(x)$ for every $x$ $\implies$ $\rho(\varepsilon^-) \precsim \rho(\varepsilon^+)$.
			This may be viewed as compatibility with $\precsim$.
			Combined, these two axioms are easily shown to already imply a notion of `internality': if $\mathcal{D}$ is complete then $\inf \varepsilon \precsim 	\rho(\varepsilon) \precsim \sup \varepsilon$.
			
			Using a certainty equivalent, we can define a notion of validity of an evidence variable $\varepsilon$ by comparing it to some benchmark $b \in \mathcal{D}$ amount of evidence:
			\begin{align*}
				\rho(\varepsilon) \precsim b.
			\end{align*}
			If we assume $\mathcal{D}$ has some identity element ``1'' and we equip it with an invertible binary operator $\times$ (extended pointwise to $\Delta$), then we can always normalize the benchmark $b$ and our evidence variable and define validity with respect to ``1":
			\begin{align}\label{eq:equivariance}
				\rho(\varepsilon) \precsim ``1".	
			\end{align}
			
			An interesting class of certainty equivalents that we consider in the remainder of this section are those based on the quasi-arithmetic mean \citep{kolmogorov1930notion, de2016mean}.\footnote{Predating his axiomatization of probability, \citet{kolmogorov1930notion} shows in a simplified setting that idempotence and monotonicity, alongside replacement and continuity axioms \emph{characterize} such certainty equivalents. In Appendix \ref{sec:measurability_conditioning}, we generalize the replacement axiom to total orders, showing that it is equivalent to assuming the existence of a `conditional' certainty equivalent given a sub-information structure. We leave characterizing a form such as \eqref{eq:ce_loss} to future work.}
			In particular, we use a monotone loss function $L : \mathcal{D} \to [-\infty, \infty]$ to define the certainty equivalent
			\begin{align}\label{eq:ce_loss}
				\rho(\varepsilon)
					= L^{-1}\left(\mathbb{E}\left[L(\varepsilon)\right]\right),
			\end{align}
			where $\mathbb{E}$ may be viewed as the expectation under the null hypothesis, and we assume $L(\mathcal{D})$ to be interval-valued to ensure this is well-defined.
			
			\begin{rmk}
				We can generalize the certainty equivalent $\rho_{\mathbb{P}}(\varepsilon) = L^{-1}\left(\mathbb{E}^{\mathbb{P}}[L(\varepsilon)]\right)$ to a composite hypothesis $H$ by using the certainty equivalent $\rho_H(\varepsilon) := \sup_{\mathbb{P} \in H} \rho_{\mathbb{P}}(\varepsilon)$.
			\end{rmk}
		
		\subsection{$e$-values, $p$-values and tests}
			To show that $e$-values, $p$-values and tests are all special evidence variables, we consider $\mathcal{D} = [0, \infty]$.
			
			Taking $\mathcal{D} = [0, \infty]$, the $e$-values and $p$-value now (superficially) diverge, depending on whether we (arbitrarily) choose to couple large or small numerical values to strong evidence.
			In particular, the $e$-value emerges if we identify $\precsim$ with $\leq$, couple $``0"$ to $0$ and $``\infty"$ to $\infty$.
			The $p$-value emerges if we instead consider its order dual, identifying $\precsim$ with $\geq$, and coupling $``0"$ to $\infty$ and $``\infty"$ to $0$.
			Equipping $[0, \infty]$ with scalar multiplication, we may swap between scales by taking the reciprocal map: $e = 1/p$, recovering the reciprocal duality between $e$-values and $p$-values from Section \ref{sec:redefine_e-values}.
			
			The classical validity condition of an $e$-value corresponds to taking the identity loss function $L : x \mapsto x$, and normalizing the current evidence to 1 so that\footnote{We may equivalently express this in terms of a post-hoc $p$-value, taking the loss $L : x \mapsto x^{-1}$ and identifying $\precsim$ with $\geq$: $\rho(p)
						= L^{-1}(\mathbb{E}[L(p)])
						= 1/\mathbb{E}[1/p]
						\geq 1$, which is equivalent to $\mathbb{E}[1/p] \leq 1$.}
			\begin{align*}
				\rho(e)
					= L^{-1}(\mathbb{E}[L(e)])
					= \mathbb{E}[e]
					\leq 1.
			\end{align*}
			Taking the loss $L(x) = 1/x$ and identifying $\precsim$ with $\geq$, this may be equivalently expressed in terms of a (post-hoc) $p$-value: $\rho(p) = L^{-1}(\mathbb{E}[L(p)]) = 1/\mathbb{E}[1/p] \geq 1$.
			These are the same underlying evidence variables, merely expressed using a different representation.
			
			A level $\alpha$ test $\phi(\alpha)$ is classically viewed as a $\{0, 1\}$-valued map, using $0$ for $``0"$, $\alpha$ as an evidence benchmark and $1$ for the decision to reject at level $\alpha$, and said to be valid if $\mathbb{E}[\phi(\alpha)] \leq \alpha$.
			To facilitate the comparison across levels, \citet{koning2024continuous} alternatively views level $\alpha$ tests $\phi(\alpha)$ as $\{0, 1/\alpha\}$-valued, so that $1/\alpha$ represents the decision to reject at level $\alpha$.
			This frees $1$ to be used as a universal benchmark, so that a test is valid if $\mathbb{E}[\phi(\alpha)] \leq 1$, highlighting that tests are nothing more than `binary' $e$-values.
			Alternatively, we could equivalently describe level $\alpha$ tests as $\{\alpha, \infty\}$-valued, making them valid if $1/\mathbb{E}[1/\phi(\alpha)] \geq 1$, showing that tests are also binary (post-hoc) $p$-values.
			
			Classical validity of a $p$-value may be viewed as instead bounding the certainty equivalent $\rho(p) = \sup_{\alpha }\mathbb{E}[\mathbb{I}\{p \leq \alpha\}/\alpha]$ by 1.
			In the context of the above, this can be viewed as the validity of the family $\phi$ of evidence variables $\phi(\alpha) = \mathbb{I}\{p \leq \alpha\}/\alpha$, rather than validity of $p$ itself.
					
		\subsection{Generalized-mean validity}\label{sec:generalized_mean}
			In this section, we generalize beyond the classical $e$-value and post-hoc $p$-value using different certainty equivalents than $\rho = \mathbb{E}$.
			In particular, we focus on the class of equivariant certainty equivalents on $\mathcal{D} = [0, \infty]$ under multiplication.
			Equivariance is the property that permits us to normalize the benchmark to 1, as in \eqref{eq:equivariance}.
			
			The class of equivariant quasi-arithmetic means on $[0, \infty]$ is the $h$-generalized means \citep{hardy1934inequalities}.
			The $h$-generalized mean (henceforth `$h$-mean') $\rho_h$ of an evidence variable $\varepsilon$ is defined as
			\begin{align*}
				\rho_h(\varepsilon)
					=
					\begin{cases}
						\left(\mathbb{E}\left[\left(\varepsilon \right)^h\right]\right)^{1/h}, &\text{ if } h \neq 0, |h| < \infty, \\
						\exp\left(\mathbb{E}\left[\log\left(\varepsilon \right)\right]\right), &\text{ if } h = 0, \\
						\esssup \varepsilon, &\text{ if } h = \infty, \\
						\essinf \varepsilon, &\text{ if } h = -\infty, \\
					\end{cases}
			\end{align*}
			where $\esssup$ and $\essinf$ return the supremum and infimum of the support of a random variable.
			This may be seen as using the loss $L(x) = x^h$ and taking limits for the $-\infty$, $0$ and $\infty$ cases.
			The choices $h = 1$, $0$ and $-1$ yield the arithmetic, geometric and harmonic mean, respectively, which are jointly known as the Pythagorean means.
			
			Note that we may apply $\rho_h$ to both $e$-values and $p$-values, as $\rho_h(e) \leq 1$ is equivalent to $\rho_{-h}(p) \geq 1$ for $p = 1/e$.
			But to avoid ambiguity in the remainder of this section, we focus on the $e$-value scale.
			
			Using the equivariance of $\rho_h$ to normalize the benchmark evidence to 1, we generalize the validity of the classical $e$-value in Definition \ref{dfn:h-valid}.
			The classical $e$-value is recovered for $h = 1$.
			\begin{dfn}[$h$-validity]\label{dfn:h-valid}
				We say that an $e$-value is $h$-valid if $\rho_h(e) \leq 1$.
			\end{dfn}
			
			We now discuss several properties of $h$-valid $e$-values.
			We start by relating the validity conditions for different choices of $h$.
			\begin{prp}\label{prp:gen_mean_inequality}
				Let $h^+ \geq h^-$.
				If $e$ is $h^+$ valid, then it is $h^-$ valid.
				If $e$ is not $h^-$ valid, then it is not $h^+$ valid.
			\end{prp}
			\begin{proof}
				This follows directly from the generalized-means inequality: $\rho_{h^-}(e) \leq \rho_{h^+}(e)$.
			\end{proof}
			
			Proposition \ref{prp:minimal_h} shows that $h = 1$ is the smallest value of $h$ for which $h$-validity ensures that the induced test family $\phi_e$ is (classically) valid.
			Equivalently, within the $h$-mean class, the condition $E[1/p] \leq 1$ is the weakest condition that still guarantees that the $p$-value is valid in the classical sense.
			We stress that this does not disqualify them to be used as evidence variables; they merely satisfy a weaker validity condition.
			Its proof is found in Appendix \ref{proof:minimal_h}.
			
			\begin{prp}\label{prp:minimal_h}
				Let $h \leq 1$.
				If $e$ is $1$-valid, then the test function $\phi_e$ is valid.
				Conversely, for every $h < 1$ there exists an $e$-value $e$ that is $h$-valid but for which the test function $\phi_e$ is not valid.
			\end{prp}
			
			The following two results show how the merging properties of $e$-values generalize to $h$-valid $e$-values.
			We omit the proofs, as they are simple but take up a considerable amount of space.
			\begin{prp}
				A (weighted) $h$-mean of $h$-valid $e$-values is an $h$-valid $e$-value.
			\end{prp}

			\begin{prp}
				The product of independent $h$-valid $e$-values is $h$-valid.
			\end{prp}

			\begin{rmk}[Sequential testing and equivariance]
				The class of $h$-valid $e$-values seems particularly useful in sequential settings, as the equivariance permits sequential re-normalization of the current evidence to $1$.	
			\end{rmk}

		\subsection{Examples}
			
			\begin{exm}[Harmonic $e$-values]\label{exm:harmonic}
				We say $e$ is a harmonic $e$-value if it is $h = -1$ valid: $\rho_{-1}(e) \equiv (\mathbb{E}[e^{-1}])^{-1} \leq 1$.
				
				Harmonic $e$-values have a surprising connection to Remark \ref{rmk:discrepancy}, where we consider the option to measure the size distortion through the \emph{size difference}:
				\begin{align*}
					\mathbb{P}(\phi(\widetilde{\alpha}) = \textnormal{reject at } \widetilde{\alpha} \mid \widetilde{\alpha} = a) - a.
				\end{align*}
				Bounding this by $0$ for all $a$ is equivalent to bounding the maximum size distortion.
				However, bounding the expected size difference leads to another notion of post-hoc validity:
				\begin{align}\label{eq:post-hoc_absolute_validity}
					\sup_{\widetilde{\alpha}}\mathbb{E}_{\widetilde{\alpha}}[\mathbb{P}(\phi(\widetilde{\alpha}) = \textnormal{reject at } \widetilde{\alpha} \mid \widetilde{\alpha}) - \widetilde{\alpha}] \leq 0,
				\end{align}
				where we restrict $\widetilde{\alpha}$ to those that lead to a rejection.
				The following result captures the connection to harmonic $e$-values.
				Its proof is found in Appendix \ref{proof:harmonic_size_difference}.
				
				\begin{prp}\label{prp:harmonic_size_difference}
					The test function $\phi$ is post-hoc valid in the sense of \eqref{eq:post-hoc_absolute_validity} if and only if its $e$-value $e$ is harmonic.
				\end{prp}
			\end{exm}
			
			\begin{exm}[Geometric $e$-values]
				We say $e$ is a geometric $e$-value if $\rho_0(e) \equiv \exp\{\mathbb{E} \log e\} \leq 1$.
				A remarkable property of geometric $e$-values is that one may use multiplication to merge both arbitrarily dependent and independent geometric $e$-values, as shown in Proposition \ref{prp:geometric_merging}.
				
				While maximizing the geometric expectation of an $e$-value under the alternative hypothesis has been widely considered in the $e$-value literature, we believe we are the first to consider it as a notion of validity and to describe this property.
				
				\begin{prp}\label{prp:geometric_merging}
					The product of geometric $e$-values is a geometric $e$-value	
				\end{prp}
				\begin{proof}
					Let $e_1, \dots, e_n$ be $n$ geometric $e$-values.
					Then,
					\begin{align*}
						\exp\left\{\mathbb{E} \log \prod_{i=1}^n e_i \right\}
							= \prod_{i=1}^n\exp\left\{\mathbb{E} \log  e_i \right\}
							\leq 1.
					\end{align*}
				\end{proof}
			\end{exm}
			
			\begin{exm}[Max $e$-values]\label{exm:max_e-values}
				We say that an $e$-value is a max $e$-value if $\rho_\infty(e) = \esssup e \leq 1$.
				Written in terms of the $p$-value, this corresponds to $\essinf p \geq 1$, which coincides with the notion of maximum size distortion discussed in Section \ref{sec:max_expected} and covered in detail in Appendix \ref{apn:post-hoc_max}.
			\end{exm}

	\section{Markov's equality}\label{sec:markov}
		The original inspiration for this work came from an attempt to close the gap in Markov's inequality, resulting in a kind of ``Markov's equality''.
		We believe this equality may be of independent interest, so we dedicate a section to this topic.
		
		To the best of our knowledge, all concentration inequalities (implicitly) rely on Markov's inequality in some step of their derivation.
		Replacing Markov's inequality by Markov's equality may pave the way to `concentration equalities'.
		We showcase this in an application to Ville's inequality.

		\subsection{Deterministic inequalities}
			Let us assume throughout that $X$ is some integrable non-negative random variable, $X \geq 0$.
			To start, let us consider the following simple inequalities,
			\begin{align}\label{ineq:deterministic_markov}
				\lfloor cX \wedge 1 \rfloor 
					\leq cX \wedge 1 
					\leq cX.
			\end{align}
			These inequalities follow from the simple fact that taking the minimum with 1 and rounding down are non-decreasing operations.
			
			We enjoy referring to these inequalities as the ``deterministic Markov's inequalities'', as variants of Markov's equality follow directly from applying the expectation operator to each term, as described in Lemma \ref{lem:MRMW-inequalities}.
			Indeed, Markov's inequality compares the first and final terms
			\begin{align*}
				\mathbb{P}_X(X \geq 1/c) 
					\leq c \mathbb{E}_X[X],
			\end{align*}
			the randomized Markov's inequality of \citet{ramdas2023randomized} compares the second and third
			\begin{align*}
				\mathbb{P}_{X, U}(X \geq U / c)
					\leq c \mathbb{E}_X[X],
			\end{align*}
			and the inequality comparing the first and second term
			\begin{align*}
				\mathbb{P}_X(X \geq 1/c)
					\leq \mathbb{E}_X[cX \wedge 1],
			\end{align*}
			can be viewed as a tighter non-integrable Markov's inequality, which is (implicitly) studied by \citet{wang2023extended} in a sequential setting.
						
			\begin{lem}[Markov-Ramdas-Manole-Wang inequalities]\label{lem:MRMW-inequalities}
				\begin{align*}
					\mathbb{P}_X(X \geq 1/c) \leq \mathbb{P}_{X, U}(X \geq U / c) \leq c \mathbb{E}_X[X].
				\end{align*}	
			\end{lem}
			\begin{proof}
				This follows from observing 	$\lfloor cX \wedge 1 \rfloor = \mathbb{I}\{X \geq 1/c\}$ and $cX \wedge 1 = \mathbb{P}_U(X \geq U/c)$, for $U \sim \textnormal{Unif}[0, 1]$ independent of $X$, and applying the expectation operator $\mathbb{E}_X$ to each term.
			\end{proof}
			
		\subsection{Markov's equality}
			Starting with the deterministic Markov's inequalities \eqref{ineq:deterministic_markov}, we obtain an equality if we divide each term by $c$ and take the supremum over $c$.
			
			\begin{lem}[Deterministic Markov's Equality]
				We have
				\begin{align}\label{eq:deterministic_markov}
					\sup_{c > 0} \lfloor cX \wedge 1 \rfloor /c
						= \sup_{c > 0}  X \wedge 1/c 
						= X.
				\end{align}
			\end{lem}
			\begin{proof}
				Define $b = 1/c$.
				The outer terms are equal because $\lfloor cX \wedge 1 \rfloor = \mathbb{I}\{X \geq 1/c\}$ and
				\begin{align*}
					\sup_{c > 0} \mathbb{I}\{X \geq 1/c\} /c
						= \sup_{b > 0} \mathbb{I}\{X \geq b\} b 
						= \sup_{0 < b \leq X} b
						= X.
				\end{align*}
				The inner term is squeezed to equality by the outer terms.
			\end{proof}

			By applying the expectation operator to the outer terms of \eqref{eq:deterministic_markov}, we obtain our ``Markov's equality'':
			\begin{align*}
				\mathbb{E}[\sup_{c > 0} \mathbb{I}\{X \geq 1/c\} /c] = \mathbb{E}[X].
			\end{align*}
			Markov's inequality follows from $\sup_{c > 0} \mathbb{E}[\cdot] \leq \mathbb{E}[\sup_{c > 0} \cdot]$.
			In Proposition \ref{prp:markov_equality}, we present a reformulation of this equality by using Proposition \ref{prp:compact}.
			\begin{prp}[Markov's Equality]\label{prp:markov_equality}
				\begin{align*}
					\sup_{\widetilde{c} > 0} \mathbb{E}_{\widetilde{c}}\left[\frac{\mathbb{P}(X \geq 1/\widetilde{c} \mid \widetilde{c})}{\widetilde{c}}\right]
						= \mathbb{E}[X].
				\end{align*}
			\end{prp}

			\begin{rmk}
				Analogous results may be obtained by replacing the expectation operator with some other order-preserving operation.
				One such example is the supremum over expectations $\sup_{\mathbb{P} \in H} \mathbb{E}^{\mathbb{P}}$ over some collection $H$ of probabilities, which is frequently encountered in hypothesis testing.
			\end{rmk}

		\subsection{Application: Ville's equality}
			Ville's inequality is a concentration inequality for martingale-like non-negative stochastic processes $(M_t)_{t \geq 0}$:
			\begin{align*}
				\sup_{\alpha} \sup_{\tau} \frac{\mathbb{P}(M_\tau \geq 1/\alpha)}{\alpha} \leq \mathbb{E}[M_0],
			\end{align*}
			where the supremum ranges over all stopping times $\tau$ adapted to the same filtration as $(M_t)_{t \geq 0}$, under some conditions on the stochastic process $(M_t)_{t \geq 0}$.
			It is often applied as a tool to derive other concentration inequalities, and it has also played a central role in the recent renaissance in sequential testing.
			
			In Proposition \ref{prp:ville_equality}, we showcase Markov's equality by deriving versions of ``Ville's equality'' for martingales, supermartingales and e-processes.
			Ville's inequality for such processes follows as a corollary, by restricting the supremum to data-independent $\alpha$.
			
			\begin{prp}[Ville's equalities]\label{prp:ville_equality}
				Let $(M_t)_{t \geq 0}$ be a non-negative stochastic process adapted to some filtration.
				If $(M_t)_{t \geq 0}$ is a martingale, then 
				\begin{align*}
					\sup_{\widetilde{\alpha}}\ &\mathbb{E}_{\widetilde{\alpha}}\left[\frac{\mathbb{P}(M_\tau \geq 1/\widetilde{\alpha}\mid \widetilde{\alpha})}{\widetilde{\alpha}}\right] 
						= \mathbb{E}[M_0],
				\end{align*}
				for every bounded stopping time $\tau$ adapted to the filtration.
				If $(M_t)_{t \geq 0}$ is a supermartingale, then 
				\begin{align*}
					\sup_{\widetilde{\alpha}} \sup_{\tau}\ &\mathbb{E}_{\widetilde{\alpha}}\left[\frac{\mathbb{P}(M_\tau \geq 1/\widetilde{\alpha}\mid \widetilde{\alpha})}{\widetilde{\alpha}}\right] 
						= \mathbb{E}[M_0],
				\end{align*}
				where the stopping time $\tau$ ranges over all stopping times adapted to the filtration.
				If $(M_t)_{t \geq 0}$ is an exact $e$-process starting at $M_0$ with respect to a possibly composite hypothesis $H$, then
				\begin{align*}
					\sup_{\widetilde{\alpha}} \sup_{\tau} \sup_{\mathbb{P} \in H} \ &\mathbb{E}_{\widetilde{\alpha}}^{\mathbb{P}}\left[\frac{\mathbb{P}(M_\tau \geq 1/\widetilde{\alpha}\mid \widetilde{\alpha})}{\widetilde{\alpha}}\right] 
						= \sup_{\mathbb{P} \in H} \mathbb{E}^{\mathbb{P}}[M_0].
				\end{align*}
			\end{prp}
			\begin{proof}
				Doob's optional stopping theorem for non-negative martingales gives $\mathbb{E}\left[M_\tau\right] = \mathbb{E}\left[M_0\right]$, for every bounded stopping time.
				Applying Markov's equality to $M_\tau$ then yields
				\begin{align}\label{eq:markov_equality_stochastic_process}
					\sup_{\widetilde{\alpha}} \mathbb{E}_{\widetilde{\alpha}}\left[\frac{\mathbb{P}(M_\tau \geq 1/\widetilde{\alpha}\mid \widetilde{\alpha})}{\widetilde{\alpha}}\right]
								= \mathbb{E}\left[M_\tau\right],
				\end{align}
				which yields the first result.
				For non-negative supermartingales we instead have $\mathbb{E}\left[M_\tau\right] \leq \mathbb{E}\left[M_0\right]$ for every stopping time.
				The second claim is then obtained by observing that the stopping time $\tau \equiv 0$ attains equality so that $\sup_{\tau} \mathbb{E}\left[M_\tau\right] = \mathbb{E}\left[M_0\right]$, and subsequently applying Markov's equality.
				The final claim follows directly from applying Markov's equality to the definition of an exact $e$-process: $\sup_{\tau} \sup_{\mathbb{P} \in H} \mathbb{E}^{\mathbb{P}}[M_\tau] = \sup_{\mathbb{P} \in H} \mathbb{E}^{\mathbb{P}}[M_0]$.
			\end{proof}

	\section{Acknowledgements}
		We thank the participants of the joint CWI / CMU ``e-readers'' seminar and the International Seminar on Selective Inference for their input. In particular, we thank Aaditya Ramdas, Chen Zhou, Glenn Shafer, Jesse Hemerik, Patrick Forr\'e, Peter Gr\"unwald, Ruben van Beesten, Ruodu Wang and Sam van Meer for fruitful discussions.

	\bibliographystyle{abbrvnat}
	\bibliography{Bibliography-MM-MC}

\begin{thebibliography}{32}
\providecommand{\natexlab}[1]{#1}
\providecommand{\url}[1]{\texttt{#1}}
\expandafter\ifx\csname urlstyle\endcsname\relax
  \providecommand{\doi}[1]{doi: #1}\else
  \providecommand{\doi}{doi: \begingroup \urlstyle{rm}\Url}\fi

\bibitem[Amrhein and Greenland(2018)]{amrhein2018remove}
V.~Amrhein and S.~Greenland.
\newblock Remove, rather than redefine, statistical significance.
\newblock \emph{Nature Human Behaviour}, 2\penalty0 (1):\penalty0 4--4, 2018.
\newblock ISSN 2397-3374.
\newblock \doi{10.1038/s41562-017-0224-0}.

\bibitem[Amrhein et~al.(2019)Amrhein, Greenland, and
  McShane]{amrhein2019scientists}
V.~Amrhein, S.~Greenland, and B.~McShane.
\newblock Scientists rise up against statistical significance.
\newblock \emph{Nature}, 567\penalty0 (7748):\penalty0 305--307, 2019.

\bibitem[De~Carvalho(2016)]{de2016mean}
M.~De~Carvalho.
\newblock Mean, what do you mean?
\newblock \emph{The American Statistician}, 70\penalty0 (3):\penalty0 270--274,
  2016.

\bibitem[Gr{\"u}nwald et~al.(2024)Gr{\"u}nwald, de~Heide, and
  Koolen]{grunwald2023safe}
P.~Gr{\"u}nwald, R.~de~Heide, and W.~Koolen.
\newblock Safe testing.
\newblock \emph{Journal of the Royal Statistical Society Series B: Statistical
  Methodology}, 86\penalty0 (5):\penalty0 1091--1128, 2024.

\bibitem[Gr{\"u}nwald(2024)]{grunwald2022beyond}
P.~D. Gr{\"u}nwald.
\newblock Beyond neyman--pearson: E-values enable hypothesis testing with a
  data-driven alpha.
\newblock \emph{Proceedings of the National Academy of Sciences}, 121\penalty0
  (39):\penalty0 e2302098121, 2024.

\bibitem[Hardy et~al.(1934)Hardy, Littlewood, and
  P\'olya]{hardy1934inequalities}
G.~Hardy, J.~Littlewood, and G.~P\'olya.
\newblock \emph{Inequalities}.
\newblock Cambridge University Press, 1934.

\bibitem[Hartog and Lei(2025)]{hartog2025family}
W.~Hartog and L.~Lei.
\newblock Family-wise error rate control with e-values.
\newblock \emph{arXiv preprint arXiv:2501.09015}, 2025.

\bibitem[Hemerik and Koning(2024)]{hemerik2024choosing}
J.~Hemerik and N.~W. Koning.
\newblock Choosing alpha post hoc: the danger of multiple standard significance
  thresholds.
\newblock \emph{arXiv preprint arXiv:2410.02306}, 2024.

\bibitem[Howard et~al.(2021)Howard, Ramdas, McAuliffe, and
  Sekhon]{howard2021time}
S.~R. Howard, A.~Ramdas, J.~McAuliffe, and J.~Sekhon.
\newblock {Time-uniform, nonparametric, nonasymptotic confidence sequences}.
\newblock \emph{The Annals of Statistics}, 49\penalty0 (2):\penalty0 1055 --
  1080, 2021.
\newblock \doi{10.1214/20-AOS1991}.

\bibitem[Jeffreys(1935)]{jeffreys1935some}
H.~Jeffreys.
\newblock Some tests of significance, treated by the theory of probability.
\newblock In \emph{Mathematical proceedings of the Cambridge philosophical
  society}, volume~31, pages 203--222. Cambridge University Press, 1935.

\bibitem[Jeffreys(1998)]{jeffreys1998theory}
H.~Jeffreys.
\newblock \emph{The theory of probability}.
\newblock OuP Oxford, 1998.

\bibitem[Kass and Raftery(1995)]{kass1995bayes}
R.~E. Kass and A.~E. Raftery.
\newblock Bayes factors.
\newblock \emph{Journal of the american statistical association}, 90\penalty0
  (430):\penalty0 773--795, 1995.

\bibitem[Katsevich and Ramdas(2020)]{katsevich2020simultaneous}
E.~Katsevich and A.~Ramdas.
\newblock Simultaneous high-probability bounds on the false discovery
  proportion in structured, regression and online settings.
\newblock \emph{The Annals of Statistics}, 48\penalty0 (6):\penalty0
  3465--3487, 2020.

\bibitem[Kolmogorov(1930)]{kolmogorov1930notion}
A.~Kolmogorov.
\newblock \emph{Sur la notion de la moyenne}.
\newblock G. Bardi, tip. della R. Accad. dei Lincei, 1930.

\bibitem[Koning(2024)]{koning2024continuous}
N.~W. Koning.
\newblock Continuous testing: Unifying tests and e-values.
\newblock \emph{arXiv preprint arXiv:2409.05654}, 2024.

\bibitem[Koolen and Gr{\"u}nwald(2022)]{koolen2022log}
W.~M. Koolen and P.~Gr{\"u}nwald.
\newblock Log-optimal anytime-valid e-values.
\newblock \emph{International Journal of Approximate Reasoning}, 141:\penalty0
  69--82, 2022.

\bibitem[Larsson et~al.(2025)Larsson, Ramdas, and Ruf]{larsson2024numeraire}
M.~Larsson, A.~Ramdas, and J.~Ruf.
\newblock The numeraire e-variable and reverse information projection.
\newblock \emph{The Annals of Statistics}, 53\penalty0 (3):\penalty0
  1015--1043, 2025.

\bibitem[Lehmann and Romano(2022)]{lehmann2022testing}
E.~Lehmann and J.~P. Romano.
\newblock \emph{Testing Statistical Hypotheses}.
\newblock Springer Nature, 2022.

\bibitem[McShane et~al.(2019)McShane, Gal, Gelman, Robert, and
  Tackett]{mcshame2019abandon}
B.~B. McShane, D.~Gal, A.~Gelman, C.~Robert, and J.~L. Tackett.
\newblock Abandon statistical significance.
\newblock \emph{The American Statistician}, 73\penalty0 (sup1):\penalty0
  235--245, 2019.
\newblock \doi{10.1080/00031305.2018.1527253}.

\bibitem[Ramdas and Manole(2023)]{ramdas2023randomized}
A.~Ramdas and T.~Manole.
\newblock Randomized and exchangeable improvements of markov's, chebyshev's and
  chernoff's inequalities.
\newblock \emph{arXiv preprint arXiv:2304.02611}, 2023.

\bibitem[Ramdas et~al.(2022)Ramdas, Ruf, Larsson, and
  Koolen]{ramdas2022admissible}
A.~Ramdas, J.~Ruf, M.~Larsson, and W.~Koolen.
\newblock Admissible anytime-valid sequential inference must rely on
  nonnegative martingales.
\newblock \emph{arXiv preprint arXiv:2009.03167}, 2022.

\bibitem[Ramdas et~al.(2023)Ramdas, Gr{\"u}nwald, Vovk, and
  Shafer]{ramdas2023gametheoretic}
A.~Ramdas, P.~Gr{\"u}nwald, V.~Vovk, and G.~Shafer.
\newblock Game-theoretic statistics and safe anytime-valid inference.
\newblock \emph{Statistical Science}, 38\penalty0 (4):\penalty0 576--601, 2023.

\bibitem[Rosenthal(1979)]{rosenthal1979file}
R.~Rosenthal.
\newblock The file drawer problem and tolerance for null results.
\newblock \emph{Psychological bulletin}, 86\penalty0 (3):\penalty0 638, 1979.

\bibitem[Shafer(2021)]{shafer2021testing}
G.~Shafer.
\newblock Testing by betting: A strategy for statistical and scientific
  communication.
\newblock \emph{Journal of the Royal Statistical Society Series A: Statistics
  in Society}, 184\penalty0 (2):\penalty0 407--431, 2021.

\bibitem[Simonsohn et~al.(2014)Simonsohn, Nelson, and Simmons]{simonsohn2014p}
U.~Simonsohn, L.~D. Nelson, and J.~P. Simmons.
\newblock P-curve: a key to the file-drawer.
\newblock \emph{Journal of experimental psychology: General}, 143\penalty0
  (2):\penalty0 534, 2014.

\bibitem[Vovk and Wang(2021)]{vovk2021values}
V.~Vovk and R.~Wang.
\newblock E-values: Calibration, combination and applications.
\newblock \emph{The Annals of Statistics}, 49\penalty0 (3):\penalty0
  1736--1754, 2021.

\bibitem[Wang and Ramdas(2025)]{wang2023extended}
H.~Wang and A.~Ramdas.
\newblock The extended ville’s inequality for nonintegrable nonnegative
  supermartingales.
\newblock \emph{Bernoulli}, 31\penalty0 (4):\penalty0 2723--2746, 2025.

\bibitem[Wang and Ramdas(2022)]{wang2022false}
R.~Wang and A.~Ramdas.
\newblock False discovery rate control with e-values.
\newblock \emph{Journal of the Royal Statistical Society Series B: Statistical
  Methodology}, 84\penalty0 (3):\penalty0 822--852, 2022.

\bibitem[Wasserstein et~al.(2019)Wasserstein, Schirm, and
  Lazar]{wasserstein2019moving}
R.~L. Wasserstein, A.~L. Schirm, and N.~A. Lazar.
\newblock Moving to a world beyond $p < 0.05$.
\newblock \emph{The American Statistician}, 73\penalty0 (sup1):\penalty0 1--19,
  2019.
\newblock \doi{10.1080/00031305.2019.1583913}.

\bibitem[Wilson(2019)]{wilson2019harmonic}
D.~J. Wilson.
\newblock The harmonic mean p-value for combining dependent tests.
\newblock \emph{Proceedings of the National Academy of Sciences}, 116\penalty0
  (4):\penalty0 1195--1200, 2019.

\bibitem[Xu et~al.(2024)Xu, Wang, and Ramdas]{xu2022post}
Z.~Xu, R.~Wang, and A.~Ramdas.
\newblock Post-selection inference for e-value based confidence intervals.
\newblock \emph{Electronic Journal of Statistics}, 18\penalty0 (1):\penalty0
  2292--2338, 2024.

\bibitem[Xu et~al.(2025)Xu, Solari, Fischer, de~Heide, Ramdas, and
  Goeman]{xu2025bringing}
Z.~Xu, A.~Solari, L.~Fischer, R.~de~Heide, A.~Ramdas, and J.~Goeman.
\newblock Bringing closure to false discovery rate control: A general principle
  for multiple testing.
\newblock \emph{arXiv preprint arXiv:2509.02517}, 2025.

\end{thebibliography}
					
	\clearpage
	\appendix
	
	\section{Connection to \citet{grunwald2022beyond}}\label{sec:grunwald}
		In this section, we contrast our work to \citet{grunwald2022beyond}.
		Written in our notation from Section \ref{sec:beyond_e-values}, we specialize Proposition 1 of \citet{grunwald2022beyond} to the testing setting in Theorem \ref{thm:grunwald}, which describes the link he establishes between post-hoc testing and $e$-values.
		Instead of using a separate evidence space for each test $\phi(\alpha)$, we use the joint evidence space $\mathcal{D} = \{\textnormal{not reject}, \textnormal{reject at level }\alpha_1, \textnormal{reject at level }\alpha_2, \dots\}$.
		This joint evidence space allows us to write his result in this setting using a single loss function $\ell : \mathcal{D} \to [0, \infty]$, not to be confused with the loss $L$ from the quasi-arithmetic mean.
		
		\begin{thm}[\citet{grunwald2022beyond}]\label{thm:grunwald}
			We say that $\phi$ is \emph{type-I risk safe} if
			\begin{align}\label{eq:grunwald}
				\mathbb{E}\left[\sup_{\alpha} \ell(\phi(\alpha))\right] \leq 1.
			\end{align}
			Then, we have that $\phi$ is type-I risk safe if and only if $\sup_{\alpha} \ell(\phi(\alpha))$ is a valid $e$-value.
		\end{thm}
		
		While superficially related to our work, this result is of a different nature.
		In particular, \citet{grunwald2022beyond} treats $e$-values as \emph{loss-valued}: living on the same scale as the loss $\ell$.
		In our framework, $e$-values are \emph{decision-valued}: $\mathcal{D}$-valued.
		
		An advantage our decision-theoretic definition of evidence is that it covers perfectly reasonable variants of post-hoc testing that do not involve classical $e$-values.
		Indeed, in Example \ref{exm:harmonic} we find that post-hoc testing under the size difference of Remark \ref{rmk:discrepancy} naturally leads to evidence variables that satisfy $(\mathbb{E}[e^{-1}])^{-1} \leq 1$, not using classically valid $e$-values.
		
		Another notable difference is that \citet{grunwald2022beyond} does not show the necessity of $e$-values to post-hoc testing, while we show any (non-dominated) and $\{0, 1\}$-valued post-hoc test function is of the form $\phi(\alpha) = \mathbb{I}\{e \geq 1/\alpha\}$.
		We believe that this is also the reason that misses the duality between $e$-values and $p$-values.
		
		We suspect the underlying reason that the framework of \citet{grunwald2022beyond} does not recover these ideas is due to the adoption of the usual non-decision-theoretic definition of the $e$-value inside an otherwise decision-theoretic framework.
		Using our decision-theoretic definition of the $e$-value of a test function $\phi$ from Section \ref{sec:post-hoc_abstract}, and making the mild assumption that $\ell$ is non-decreasing and lower semicontinuous in the order topology, \eqref{eq:grunwald} becomes
		\begin{align*}
			\mathbb{E}\left[\ell(\sup_{\alpha} \phi(\alpha))\right]
				\equiv \mathbb{E}\left[\ell(e_\phi)\right]
				\leq 1,
		\end{align*}
		so that $e_\phi$ is generally \emph{not} a classically valid $e$-value, unless $\ell(x) = x$.
		
		

	\section{Double post-hoc $\alpha$ validity}\label{sec:two-way}
		In this section, we observe that we can have a $p$-value $p$ that is post-hoc if our hypothesis is true, with a reciprocal $1/p$ that is post-hoc if the hypothesis is false.
		Such a $p$-value is interesting, because it is a single number that has a guarantee in both directions.
		
		\begin{dfn}
			A $p$-value is double post-hoc if it is post-hoc under the hypothesis, and its reciprocal is post-hoc under the alternative.
		\end{dfn}
		
		A natural example of a double post-hoc $p$-value is a likelihood ratio, as formalized in Theorem \ref{thm:LR}.
		Likelihood ratios were already extensively considered in the context of $e$-values, as they maximize the geometric mean under the alternative: see e.g. \citet{koolen2022log} \citet{grunwald2023safe} and \citet{larsson2024numeraire}.
		Theorem \ref{thm:LR} gives an alternative motivation for looking at likelihood ratios in the context of $e$-values.
		The result generalizes to the composite likelihood ratios studied by \citet{larsson2024numeraire}.
		
		The result means that we can simultaneously interpret a likelihood ratio and its reciprocal as a post-hoc $p$-value for the hypothesis and the alternative.
		Such a double interpretation was already argued for likelihood ratios in the context of Bayes factors \citep{jeffreys1935some,jeffreys1998theory,kass1995bayes}.
		In this setting, Theorem \ref{thm:LR} implies that we can interpret such a Bayes factor as a double post-hoc $p$-value.
		\begin{thm}\label{thm:LR}
			Consider the hypothesis $\{\mathbb{P}_0\}$ and alternative $\{\mathbb{P}_1\}$.
			If $\mathbb{P}_0$ and $\mathbb{P}_1$ are mutually absolutely continuous, then the likelihood ratio $d\mathbb{P}_0/d\mathbb{P}_1$ is a double post-hoc $p$-value.	
		\end{thm}
		\begin{proof}
			The result follows from Theorem \ref{thm:php}, as $\mathbb{E}^{\mathbb{P}_0} d\mathbb{P}_1 / d\mathbb{P}_0 = \mathbb{E}^{\mathbb{P}_1} d\mathbb{P}_0 / d\mathbb{P}_1 = 1$. 
		\end{proof}
%
		
		\begin{rmk}
			Note that the constant 1 is also a double post-hoc $p$-value, so that double post-hoc validity is not sufficient for a post-hoc $p$-value to be useful.
			It is merely an interesting additional guarantee that we may want a post-hoc $p$-value to satisfy.
		\end{rmk}

	\section{Post-hoc Multiple testing}
		\subsection{Post-hoc anytime validity}\label{sec:anytime}
			Post-hoc hypothesis testing naturally generalizes to a sequential setting, where we want to test while new data is still arriving.
			To model this, we consider a process of test functions $(\phi_t)_{t \in \mathbb{N}}$ indexed by a time $t$.
			This can be interpreted as observing test functions $\phi_1, \phi_2, \dots$ over time.
			
			To enforce the directionality of time, we additionally consider a collection of stopping times $\mathcal{T}$ with respect to an underlying filtration that specifies the available information at each moment in time.
			A process of test functions $(\phi_t)_{t \in \mathbb{N}}$ is said to be anytime valid with respect to $\mathcal{T}$ if
			\begin{align*}
				\sup_\alpha \sup_{\tau \in \mathcal{T}} \mathbb{E}\,\phi_\tau(\alpha)/\alpha \leq 1.
			\end{align*}
			Such a test process induces a $p$-process $(p_t)_{t \in \mathbb{N}}$, defined at each $t$ as the smallest value of $\alpha$ for which $\phi_t(\alpha) = 1$.
			We say that this $p$-process is valid if its underlying test process is valid, which coincides with the definition of a $p$-process given by \citet{ramdas2022admissible}. 
			
			Generalizing to post-hoc anytime validity, we say that a test process is post-hoc anytime valid if
			\begin{align*}
				\sup_{\tau \in \mathcal{T}} \mathbb{E}\,\sup_{\alpha > 0}\phi_\tau(\alpha)/\alpha \leq 1.
			\end{align*}
			Written in terms of the expected size distortion, this is equal to
			\begin{align*}
				\sup_{\widetilde{\alpha} }\sup_{\tau \in \mathcal{T}} \mathbb{E}\left[\frac{\mathbb{P}(\phi_\tau (\widetilde{\alpha}) = 1\mid \widetilde{\alpha})}{\widetilde{\alpha}}\right]\leq 1.
			\end{align*}
			
			We say that a $p$-process is post-hoc anytime valid if its underlying test process is.
			Equivalently, we can directly define post-hoc anytime validity through a $p$-process.
			\begin{dfn}
				A $p$-process is post-hoc anytime valid with respect to $\mathcal{T}$ if 
				\begin{align*}
					\sup_{\tau \in \mathcal{T}} \mathbb{E}\,1/p_\tau
						\leq 1.
				\end{align*}
			\end{dfn}
			
			This is precisely the reciprocal of an $e$-process \citep{ramdas2022admissible, ramdas2023gametheoretic}.
			These definitions generalize to the continuous time setting.
			
		\subsection{Post-hoc familywise error rate}\label{sec:fwer}
			Dropping the directionality of time in Section \ref{sec:anytime} yields the so-called familywise error rate \citep{ramdas2022admissible}.
			Let us now write $(\phi_{i})_{i \in \mathcal{I}}$ for the family of test functions, where $\mathcal{I}$ is some index set, where we use $i$ instead of $t$ to emphasize this is typically not a time dimension.
			Moreover, let $\widetilde{\mathcal{I}}$ denote the collection of random indexes that can depend on $(\phi_{i})_{i \in \mathcal{I}}$.
			The collection $\widetilde{\mathcal{I}}$ replaces the collection of stopping times $\mathcal{T}$ in the sequential setup.
			
			The family of test functions is then said to control the familywise error rate if
			\begin{align*}
				\sup_{\alpha} \sup_{\widetilde{\iota} \in \widetilde{\mathcal{I}}} \mathbb{E}\, \phi_{\widetilde{\iota}}(\alpha)/\alpha 
					\equiv \sup_{\alpha}\mathbb{E} \sup_{i \in \mathcal{I}}	\phi_i(\alpha)/\alpha 
						\leq 1.
			\end{align*}
			
			The familywise error rate easily generalized to the post-hoc familywise error rate, by moving the supremum over $\alpha$ inside the expectation.
			\begin{dfn}
				A collection of test functions $(\phi_i)_{i \in \mathcal{I}}$ is said to control the post-hoc familywise error rate if
				\begin{align*}
					\sup_{\widetilde{\iota} \in \widetilde{\mathcal{I}}} \mathbb{E}\sup_{\alpha} 
					\phi_{\widetilde{\iota}}(\alpha)/\alpha 
						\equiv \mathbb{E} \sup_{\alpha} \sup_{i \in \mathcal{I}} 
					\phi_i(\alpha)/\alpha 
						\leq 1.
				\end{align*}
			\end{dfn}
			
			In order to write this in terms of $p$-values, let us consider the maximum test function $\overline{\phi}$, defined as $\overline{\phi}(\alpha) = \sup_i \phi_i(\alpha)$ and assume that it is indeed a test function, which it is under the mild condition that it still attains both 0 and 1.
			This is satisfied, for example, if we consider a finite number of tests.
			We can then equivalently say that the family of test functions $(\phi_{i})_{i \in \mathcal{I}}$ controls the (post-hoc) familywise error rate if $\overline{\phi}$ is (post-hoc) valid.
			Moreover, as $\overline{\phi}$ is assumed to be a test function, it has a $p$-value $\overline{p}$.
			We can then formulate the familywise error rate in terms of this $p$-value, as captured in Theorem \ref{thm:fwer}.
			
			\begin{thm}\label{thm:fwer}
				The family of test functions $(\phi_{i})_{i \in \mathcal{I}}$ controls the (post-hoc) familywise error rate if $\overline{p}$ is a (post-hoc) valid $p$-value.
			\end{thm}
			
		\subsection{Post-hoc false discovery rate}
			We can weaken the familywise error rate by replacing the supremum $\sup_i$ in Section \ref{sec:fwer} by an expectation $\mathbb{E}_{\widetilde{\iota}}$ over a random index $\widetilde{\iota}$ on $\mathcal{I}$.
			This yields an expected multiple testing error bound:
			\begin{align*}
				\sup_{\alpha}  \mathbb{E}\, \mathbb{E}_{\widetilde{\iota}} \phi_{\widetilde{\iota}}(\alpha)/\alpha 
						\leq 1.
			\end{align*}
			If $\mathcal{I}$ is non-empty and finite, and $\widetilde{\iota}$ is uniform on $\mathcal{I}$, then $\mathbb{E}_{\widetilde{\iota}} \phi_i(\alpha) = \sum_{i=1}^{|\mathcal{I}|}\phi_i(\alpha) / |\mathcal{I}|$ is also called the \emph{false discovery proportion} of the tests $(\phi_i(\alpha))_{i \in \mathcal{I}}$.
			
			A post-hoc variant is obtained by moving $\sup_{\alpha}$ inside the outermost expectation.
			Moreover, $\widetilde{\phi} := \mathbb{E}_{\widetilde{\iota}}\phi_{\widetilde{\iota}}$ is a randomized test function, as it takes value in $[0, 1]$ (see Appendix \ref{sec:randomized}).
			Such a randomized test function has an associated $p$-function $\widetilde{p}$, so that post-hoc expected multiple testing errors can be equivalently formulated as $\widetilde{p}$ being a post-hoc $p$-function.
	
			\begin{thm}
				Let $\widetilde{\iota}$ be some data-independent random index on $\mathcal{I}$.
				Then, $(\phi_{i})_{i \in \mathcal{I}}$ controls the (post-hoc) expected multiple testing error with respect to $\widetilde{\iota}$ if and only if $\widetilde{p}$ is a (post-hoc) valid $p$-function.
			\end{thm}
			
			Clearly, the expectation with respect to $\widetilde{\iota}$ can be replaced by other functions of $(\phi_i)_{i\in\mathcal{I}}$ to obtain other types of (post-hoc) multiple testing errors.
			Related definitions appear in \citet{wang2022false}, \citet{katsevich2020simultaneous} and \citet{wang2022false}.

	\section{Post-hoc max size distortion}\label{apn:post-hoc_max}
		In this section, we return to the control of the maximum size distortion, as in Section \ref{sec:max_expected}.
		In Theorem \ref{thm:impossible}, we show that it is unfortunately impossible to do meaningful post-hoc $\alpha$ hypothesis testing without having any size distortion at any realization of the data-dependent level $\widetilde{\alpha}$.
		This result shows that only uninteresting test functions and $p$-values would satisfy such a guarantee.
		
		The underlying issue is that the maximum size distortion only considers the worst case value that $\widetilde{\alpha}$ may take, even if it is extremely unlikely.
		This makes control of the maximum size distortion overly conservative.
						
		\begin{thm}\label{thm:impossible}
			The following statements are equivalent:
			\begin{itemize}
				\item A test function $\phi$ controls the maximum size distortion for every data-dependent level $\widetilde{\alpha}$,
				\item $\phi(\alpha) = 0$ for all $\alpha \in (0, 1)$,
				\item its $p$-value $p$ satisfies $p \geq 1$.
			\end{itemize}	
		\end{thm}			
		\begin{proof}[Proof of Theorem \ref{thm:impossible}]
			The equivalence of the second and third statement follows from the definition of the $p$-value.
			
			Next, we show that the first statement implies the third.
			Suppose that $\phi$ controls the maximum size distortion for every data-dependent level $\widetilde{\alpha}$.
			A potential data-dependent level would be the smallest level at which it rejects: $\widetilde{\alpha} = p$. 
			This choice yields 
			\begin{align*}
				\sup_{a\, \in\, \textnormal{support}(p)}& \mathbb{P}(\phi(p) = 1 \mid p = a)/a \\
				&= \sup_{a\, \in\, \textnormal{support}(p)} \mathbb{E}( 1 \mid p = a)/a \\
				&= \sup_{a\, \in\, \textnormal{support}(p)} 1 / a = 1 / \inf\{a : a\in \textnormal{support}(p)\} \\
				&\leq 1
			\end{align*}
			This implies $p \geq 1$. Since $p$ is the smallest data-dependent level at which $\phi$ rejects, it cannot reject for $\alpha < 1$.
			
			Finally, suppose that $\phi(\alpha) = 0$ for all $\alpha \in (0, 1)$.
			This implies $\mathbb{P}(\phi(\widetilde{\alpha}) = 1 \mid \widetilde{\alpha} = a) = 0 \leq a$ for all $a \in (0, 1)$, for any data-dependent level $\widetilde{\alpha}$.
			Moreover, $\mathbb{P}(\phi(\widetilde{\alpha}) = 1 \mid \widetilde{\alpha} = a) \leq a$ for all $a \geq 1$, since $\mathbb{P}(\phi(\widetilde{\alpha}) = 1 \mid \widetilde{\alpha} = a) \leq 1$ as it is a probability.
			Hence, $\mathbb{P}(\phi(\widetilde{\alpha}) = 1 \mid \widetilde{\alpha} = a) \leq a$ for all $a > 0$.
		\end{proof}
			
	\section{Randomized post-hoc validity}\label{sec:randomized}
		(Potentially) randomized tests $\widetilde{\tau}$ take value in $[0, 1]$, rather than $\{0, 1\}$.
		The value of a randomized test can be interpreted as the conditional rejection probability, given the data.
		We can convert a randomized test into a binary decision by rejecting if $U \leq \widetilde{\tau}$, where $U \sim \text{Unif}(0, 1]$ independently.
		This is `reversible' by integrating out $U$: $\widetilde{\tau} = \mathbb{E}_U\mathbb{I}\{U \leq \widetilde{\tau}\}$.
		
		In some of the results in this section, it is relevant to distinguish between simple hypotheses which only contain a single distribution, and composite hypotheses which may contain multiple.
		For this reason, we explicitly derive our results under composite hypotheses: $\mathbb{E}^{H_0} := \sup_{\mathbb{P} \in H_0} \mathbb{E}^{\mathbb{P}}$.
		
		We say $\widetilde{\phi} : \mathbb{R}_+ \to [0, 1]$ is a randomized test function if $\widetilde{\phi}(\alpha)$ is cadlag, not constant and non-decreasing in $\alpha$.
		As with non-randomized tests, such a function is commonly said to be valid if $\sup_\alpha \mathbb{E}^{H_0} \widetilde{\phi}(\alpha)/\alpha  
				\leq 1$.
		We extend post-hoc validity to randomized test functions, by saying that such a function is post-hoc if $\mathbb{E}^{H_0} \sup_{\alpha > 0} \widetilde{\phi}(\alpha)/\alpha \leq 1$.
				
		\begin{rmk}[Order of randomization and $\alpha$ selection]
			We assume throughout this section that we \emph{first} select the level $\alpha$ and only then apply external randomization.
			In some sense, this is not truly \emph{post-hoc} selection of $\alpha$, since the selection is followed by something else.
			However, it is post-hoc in the sense that the selection is made with full knowledge of the data.
			
			We can, of course, also study randomized testing in a way that the randomization takes place before the final selection of $\alpha$, so that $\alpha$ can also be based on the external randomization.
			In fact, this simply reduces to the setting discussed in the main text, by viewing the external randomization as part of the data.
		\end{rmk}

		\subsection{$p$-functions}
			It is necessary to go beyond $p$-values if we are interested in randomized testing.
			The issue is that unlike non-randomized test functions, a randomized test function \emph{cannot} be losslessly converted into a $p$-value: the smallest value of $\alpha$ for which they hit $1$.
			This is because a randomized test function is not an indicator function in $\alpha$.
			This is illustrated in the first plot in Figure \ref{fig:functions}.
			
			For randomized testing we generalize $p$-values to $p$-functions, which we illustrate in the second plot in Figure \ref{fig:functions}.
			\begin{dfn}[$p$-function]
				For a (randomized) test function $\widetilde{\phi}$, we say that $\widetilde{p}$ is a $p$-function if $\widetilde{p}(u) = \inf\{\alpha : \widetilde{\phi}(\alpha) \geq u\}$.
				A $p$-function is non-randomized if $\widetilde{p}(u) = \widetilde{p}(1)$ for all $u \in (0, 1]$.
				If not, it is randomized.
			\end{dfn}
			The value $\widetilde{p}(u)$ can be interpreted as the smallest level at which the test would have rejected with probability $u$, had we chosen the level $\widetilde{p}(u)$.
			This means that $p(1)$ is a $p$-value, which coincides with the definition of a $p$-value for a randomized test that is used by \citet{lehmann2022testing}.
			A more technical interpretation is that a test function $\widetilde{\phi}$ is a CDF, and its $p$-function $\widetilde{p}$ is the corresponding quantile function.
			
			To convert a $p$-function into a binary decision, we can reject if $p(U) \leq \alpha$, where $U \sim \text{Unif}(0, 1]$ independently.
			Hence, $p(U)$ can be interpreted as an explicitly randomized $p$-value.

			A $p$-function of a non-randomized test is non-randomized, and therefore coincides with a $p$-value.
			This is shown in Proposition \ref{prp:p-family-value}.
			\begin{prp}\label{prp:p-family-value}
				If $\widetilde{\phi}$ is a non-randomized test function, then $\widetilde{p}(u) = \inf\{\alpha : \widetilde{\phi}_\alpha = 1\}= \widetilde{p}(1)$ for all $u$.
			\end{prp}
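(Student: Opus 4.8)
The plan is to observe that non-randomization collapses the threshold condition defining $p_\beta$ into a single condition that no longer involves $\beta$. I would work pathwise, i.e.\ fix a realization of the data and a value $\beta \in (0,1]$. Since $(\widetilde{\phi}_\alpha)_{\alpha > 0}$ is a non-randomized test family, $\widetilde{\phi}_\alpha \in \{0,1\}$ for every $\alpha > 0$. The elementary fact I would use is that for a number $x \in \{0,1\}$ and $\beta \in (0,1]$, one has $x \geq \beta$ if and only if $x = 1$, and likewise $x \geq 1$ if and only if $x = 1$.

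Applying this with $x = \widetilde{\phi}_\alpha$ for each $\alpha$ gives the set identity
\begin{align*}
	\{\alpha > 0 : \widetilde{\phi}_\alpha \geq \beta\}
		= \{\alpha > 0 : \widetilde{\phi}_\alpha = 1\}
		= \{\alpha > 0 : \widetilde{\phi}_\alpha \geq 1\}.
\end{align*}
Taking the infimum of these three coinciding sets, with the usual convention $\inf \emptyset = \infty$ (harmless here, since the three sets are literally equal, so either all are empty or none are), yields $p_\beta = \inf\{\alpha : \widetilde{\phi}_\alpha = 1\} = p_1$. As this holds for every $\beta \in (0,1]$, the claim follows.

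There is no real obstacle: the only point that needs care is that $\beta > 0$ is essential, since this is exactly what excludes $\widetilde{\phi}_\alpha = 0$ from the sublevel set and thereby forces $\widetilde{\phi}_\alpha \geq \beta$ to be equivalent to $\widetilde{\phi}_\alpha = 1$. The cadlag and monotonicity assumptions on the family are not needed for this identity itself; they only matter for $p_\beta$ and $p_1$ to behave well as random variables, as used elsewhere.
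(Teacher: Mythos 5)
Your proof is correct and takes essentially the same approach as the paper: both hinge on the observation that for $\widetilde{\phi}_\alpha \in \{0,1\}$ and $\beta \in (0,1]$, the condition $\widetilde{\phi}_\alpha \geq \beta$ is equivalent to $\widetilde{\phi}_\alpha = 1$, so the defining set for $p_\beta$ does not depend on $\beta$. You spell out the set-identity and the $\inf \emptyset$ convention more explicitly than the paper, but the idea is identical.
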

			\begin{proof}
				If $\widetilde{\phi}(\alpha) \in \{0, 1\}$ for all $\alpha$, then for $u > 0$ we have: $\widetilde{\phi}(\alpha) \geq u$ if and only if $\widetilde{\phi}(\alpha) = 1$. Substituting this into the definition of a $p$-function yields the result.
			\end{proof}
			
			A $p$-function can be translated back into a test function.
			In particular, first observe that $\widetilde{p}$ is callal (the reverse of cadlag) and non-decreasing, because $\widetilde{\phi}$ is cadlag and non-decreasing.
			In fact, if viewed as functions in $\alpha$ and $u$, these functions form a Galois connection: $\widetilde{\phi}(\alpha) \geq u$ if and only if $\widetilde{p}(u) \leq \alpha$.
			This shows that we can convert a $p$-function back into a test function through the map $\widetilde{p} \mapsto \sup\{u : \widetilde{p}(u) \leq \alpha\} = \sup_{u \in (0, 1]} u\mathbb{I}\{\widetilde{p}(u) \leq \alpha\}$.
			Moreover, if $\widetilde{\phi}$ is continuous and strictly increasing then $\widetilde{\phi}$ and its $p$-function are inverses of each other: $\widetilde{\phi}(\widetilde{p}(u)) = u$ and $\widetilde{p}(\widetilde{\phi}(\alpha)) = \alpha$.
			
			\begin{figure}
				\centering
				\begin{tikzpicture}[scale=0.55]
				    \draw [<->,thick] (0,5) node (yaxis) [above] {$\phi(\alpha)$}
				        |- (5,0) node (xaxis) [right] {$\alpha$};
				
				    \draw[black, thick] (0,0) -- (1,0); 
				    \draw[black, thick] (1,2) -- (2,2); 
				    \draw[black, thick] (2,2) -- (4,4); 
				    \draw[black, thick] (4,4) -- (4.5,4); 
				    
					\draw[black, dotted] (1,0) -- (1,2); 
				
				    \fill[white] (1,0) circle (2pt);
				    \fill[black] (1,2) circle (2pt);
				    \draw[black] (1,0) circle (2pt);
				    
				    \draw[black] (4,0) -- (4,-0.2); 
    				\node[below] at (4,-0.2) {$p(1)$}; 
    				\draw[black, dashed] (4,0) -- (4,4); 
    				
    				\draw[black] (0,4) -- (-0.2,4); 
    				\node[below] at (-0.5, 4.5) {$1$}; 
				\end{tikzpicture}
				\begin{tikzpicture}[scale=0.55]
				    \draw [<->,thick] (0,5) node (yaxis) [above] {$p(u)$}
				        |- (5,0) node (xaxis) [right] {$u$};
				
				    \draw[black, thick] (0,1) -- (2,1); 
				    \draw[black, thick] (2,2) -- (4,4); 
					
					\draw[black, dotted] (2,1) -- (2,2); 
					
				    \fill[white] (0,1) circle (2pt);
				    \draw[black] (0,1) circle (2pt);
				    \fill[black] (2,1) circle (2pt);
				    \fill[white] (2,2) circle (2pt);
				    \draw[black] (2,2) circle (2pt);
				    
    				\draw[black] (4,0) -- (4, -0.2); 
    				\node[below] at (4, -0.2) {$1$}; 
				\end{tikzpicture}
				\caption{Illustration of (realized) test family (left) and its associated $p$-family (right). We can see the relationship between test families and $p$-families by swapping the horizontal and vertical axes.}
				\label{fig:functions}
			\end{figure}
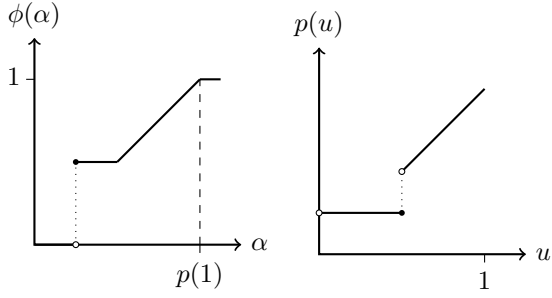
			
		\subsection{Post-hoc $p$-functions}
			We say that a $p$-function is post-hoc if its underlying test family is post-hoc.
			Alternatively, a post-hoc $p$-function can also be defined in a standalone manner, as shown in Theorem \ref{thm:ph-valid-rand-p}.
			
			Another consequence of Theorem \ref{thm:ph-valid-rand-p} is that there is no reciprocal duality between $e$-values and post-hoc $p$-functions: for a post-hoc $p$-function $\widetilde{p}$, $1/\widetilde{p}$ is not necessarily an $e$-value.
			Redefining an $e$-value as such a reciprocal, as suggested in Section \ref{sec:redefine_e-values}, would recover this duality and make $e$-values meaningful objects in the randomized testing setting.
			
			\begin{thm}\label{thm:ph-valid-rand-p}
				$\widetilde{p}$ is a post-hoc $p$-function if and only if $\mathbb{E}^{H_0} \sup_{u \in (0, 1]} u / \widetilde{p}(u)
					\leq 1$.
			\end{thm}
			\begin{proof}
				We have
				\begin{align*}
					\sup_{\alpha > 0} \widetilde{\phi}(\alpha)/\alpha
						&= \sup_{\alpha > 0} \sup_{u \in (0, 1]} u\mathbb{I}\{\widetilde{p}(u) \leq \alpha\}/\alpha \\
						&=\sup_{u \in (0, 1]} \sup_{\alpha > 0}u\mathbb{I}\{\widetilde{p}(u) \leq \alpha\}/\alpha \\
						&= \sup_{u \in (0, 1]} u / \widetilde{p}(u).
				\end{align*}
			\end{proof}
			
			Corollary \ref{cor:p_1_ph} shows post-hoc $p$-functions give rise to post-hoc $p$-values.
			\begin{cor}\label{cor:p_1_ph}
				If $\widetilde{p}$ is a post-hoc $p$-function then $\widetilde{p}(1)$ is a post-hoc $p$-value.
			\end{cor}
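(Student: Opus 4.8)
The plan is to reduce the claim to the two characterization results already established, namely Theorem \ref{thm:php} (which describes post-hoc $p$-values) and Theorem \ref{thm:ph-valid-rand-p} (which describes post-hoc $p$-families), and then connect them by a one-line pointwise domination.

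First I would unpack the hypothesis using Theorem \ref{thm:ph-valid-rand-p}: since $(p_\beta)_{\beta \in (0,1]}$ is a post-hoc $p$-family, we have
\begin{align*}
	\sup_{\mathbb{P} \in H_0} \mathbb{E}^{\mathbb{P}} \sup_{\beta \in (0,1]} \beta/p_\beta \leq 1.
\end{align*}
Next, I would observe that the inner supremum dominates the single term obtained by choosing $\beta = 1$, i.e. pointwise (on the sample space) $1/p_1 \leq \sup_{\beta \in (0,1]} \beta/p_\beta$. Taking expectations under any $\mathbb{P} \in H_0$ and then the supremum over $H_0$ gives
\begin{align*}
	\sup_{\mathbb{P} \in H_0} \mathbb{E}^{\mathbb{P}} 1/p_1 \leq \sup_{\mathbb{P} \in H_0} \mathbb{E}^{\mathbb{P}} \sup_{\beta \in (0,1]} \beta/p_\beta \leq 1.
\end{align*}
Finally, I would invoke Theorem \ref{thm:php} in the converse direction: an inequality of the form $\sup_{\mathbb{P} \in H_0} \mathbb{E}^{\mathbb{P}} 1/p_1 \leq 1$ is exactly what it means for $p_1$ to be a post-hoc $p$-value, so the proof is complete.

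I do not expect a genuine obstacle here; the only points to be careful about are bookkeeping rather than mathematics. One should note that $p_1 = \inf\{\alpha : \widetilde{\phi}_\alpha \geq 1\}$ is a well-defined (possibly infinite) random variable, with the convention $1/\infty = 0$, so that the degenerate case where the test never rejects is handled (there $1/p_1 = 0$ and the bound is trivial), matching the degenerate branch in the proof of Theorem \ref{thm:php}. Beyond that, the argument is simply that the supremum over $\beta \in (0,1]$ of $\beta/p_\beta$ is at least its value at $\beta = 1$, so monotonicity of expectation does all the work.
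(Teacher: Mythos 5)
Your argument is correct and is essentially the paper's proof written out in full: the paper also reduces the claim to Theorem \ref{thm:ph-valid-rand-p} via the pointwise bound $1/p_1 \le \sup_{\beta \in (0,1]} \beta/p_\beta$ (taking $\beta = 1$ in the supremum) and then appeals to Theorem \ref{thm:php}. Your extra care about the degenerate case $p_1 = \infty$ and the missing $\mathbb{E}^{\mathbb{P}}$ that the paper's one-liner silently elides are both sound.
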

			\begin{proof}
				This follows from Theorem \ref{thm:ph-valid-rand-p} as $\mathbb{E}^{H_0} 1 / \widetilde{p}(1)\leq  \mathbb{E}^{H_0} \sup_{u \in (0, 1]} u / \widetilde{p}(u)$.
			\end{proof}
		
		\subsection{Randomizing post-hoc $p$-values}\label{sec:inadmissible}
			In Theorem \ref{thm:markov_equality}, we show that any post-hoc $p$-value characterizes and is characterized by a special post-hoc $p$-function.
						
			\begin{thm}\label{thm:markov_equality}
				$p$ is a post-hoc $p$-value if and only if $\widetilde{p}$ defined as $\widetilde{p}(u) = u p$ is a post-hoc $p$-function.
			\end{thm}
			\begin{proof}
				We have $\mathbb{E}^{H_0}\sup_{u \in (0, 1]} u / (u p) = \mathbb{E}^{H_0} p^{-1}$.
			\end{proof}
			
			An implication of this result is that any non-randomized post-hoc $p$-function can be trivially improved by a randomized post-hoc $p$-function.
			This is easiest to see when explicitly randomizing.
			In particular let $U \sim \text{Unif}(0, 1]$ independently of a post-hoc $p$-value $p$, then $U p$ is almost surely smaller than $p$.
			Hence the explicitly randomized $p$-value $U p$ is almost surely smaller than $p$.
			Equivalently, for a post-hoc test function $\widetilde{\phi}$, $\phi(U p)$ is valid. 
			
			The same result can also be expressed in terms of an $e$-value and post-hoc test function, as captured in Corollary \ref{cor:inadmissible_e_value}.
			This result improves a recent result by \citet{ramdas2023randomized} who show that $\alpha \mapsto \alpha e \wedge 1$ is a valid test function.
			
			\begin{cor}\label{cor:inadmissible_e_value}
				$e$ is an $e$-value if and only if $\alpha \mapsto \alpha e \wedge 1$ is a post-hoc test function.
			\end{cor}
			\begin{proof}
				We have $\sup_{\mathbb{P} \in H_0} \mathbb{E}^{\mathbb{P}} \sup_{\alpha > 0} (\alpha e \wedge 1) / \alpha  = \sup_{\mathbb{P} \in H_0} \mathbb{E}^{\mathbb{P}} \sup_{\alpha > 0} (e \wedge 1/\alpha) = \sup_{\mathbb{P} \in H_0} \mathbb{E}^{\mathbb{P}} e$.	
			\end{proof}
		
		\subsection{Merging post-hoc $p$-functions}\label{sec:merging}
			A weighted harmonic mean of post-hoc $p$-functions is post-hoc valid.
			This property extends to (data-independent) harmonic mixtures of possibly infinitely many $p$-functions.

			\begin{thm}\label{thm:harm_mean_phrp}
				A weighted harmonic mean of post-hoc $p$-functions is a post-hoc $p$-function.
			\end{thm}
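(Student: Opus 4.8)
The plan is to reduce everything to the standalone criterion of Theorem~\ref{thm:ph-valid-rand-p}, exploiting that for a harmonic mean the relative error $\beta \mapsto \beta/p_\beta$ becomes \emph{linear} in the reciprocals $1/p_\beta^{(i)}$, after which the result drops out of subadditivity of the supremum and linearity of expectation.

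Fix post-hoc $p$-families $(p^{(i)}_\beta)_{\beta \in (0,1]}$ indexed by $i$ in a (possibly infinite but countable) set $I$, together with data-independent weights $w_i \ge 0$ satisfying $\sum_{i \in I} w_i = 1$, and define their weighted harmonic mean componentwise in $\beta$,
\[
	p_\beta \;:=\; \Big(\textstyle\sum_{i \in I} w_i / p^{(i)}_\beta\Big)^{-1}, \qquad \beta \in (0,1].
\]
First I would check this is a legitimate $p$-family. Each $p^{(i)}_\beta$ is callal and non-decreasing in $\beta$, so each $1/p^{(i)}_\beta$ is callal and non-increasing, hence so is the countable weighted sum $\sum_{i} w_i/p^{(i)}_\beta$, and therefore $p_\beta$ is callal and non-decreasing in $\beta$. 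By the Galois-connection discussion preceding Theorem~\ref{thm:ph-valid-rand-p}, any callal non-decreasing $(p_\beta)_{\beta \in (0,1]}$ is the $p$-family of the test family $\widetilde{\phi}_\alpha = \sup_{\beta \in (0,1]} \beta\,\mathbb{I}\{p_\beta \le \alpha\}$, so $(p_\beta)_{\beta \in (0,1]}$ qualifies.

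Next I would bound the relative error. For each fixed $\beta$ we have $\beta/p_\beta = \sum_{i \in I} w_i\,(\beta/p^{(i)}_\beta)$, so taking the supremum over $\beta$ and using subadditivity of the supremum gives
\[
	\sup_{\beta \in (0,1]} \frac{\beta}{p_\beta}
		\;=\; \sup_{\beta \in (0,1]} \sum_{i \in I} w_i \frac{\beta}{p^{(i)}_\beta}
		\;\le\; \sum_{i \in I} w_i \sup_{\beta \in (0,1]} \frac{\beta}{p^{(i)}_\beta}.
\]
All terms are nonnegative, so applying $\sup_{\mathbb{P} \in H_0}\mathbb{E}^{\mathbb{P}}$, pulling the sum out by Tonelli (this is where countability of $I$ and data-independence of the $w_i$ matter), and invoking Theorem~\ref{thm:ph-valid-rand-p} for each $(p^{(i)}_\beta)_{\beta \in (0,1]}$ yields
\[
	\sup_{\mathbb{P} \in H_0}\mathbb{E}^{\mathbb{P}} \sup_{\beta \in (0,1]} \frac{\beta}{p_\beta}
		\;\le\; \sum_{i \in I} w_i\, \sup_{\mathbb{P} \in H_0}\mathbb{E}^{\mathbb{P}} \sup_{\beta \in (0,1]} \frac{\beta}{p^{(i)}_\beta}
		\;\le\; \sum_{i \in I} w_i \;=\; 1,
\]
and one more appeal to Theorem~\ref{thm:ph-valid-rand-p} shows $(p_\beta)_{\beta \in (0,1]}$ is a post-hoc $p$-family.

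There is no deep obstacle here: the content is entirely in choosing the componentwise definition of the harmonic mean and noticing that it linearises $\beta/p_\beta$. The only points that need care are bookkeeping ones — confirming the merged object is genuinely callal, non-decreasing (hence a $p$-family), and, in the infinitely-many case, justifying the two interchanges of the infinite sum with $\sup_\beta$ and with $\mathbb{E}^{\mathbb{P}}$, both of which are licensed by nonnegativity (subadditivity of $\sup$ and Tonelli). As a sanity check, specialising to non-randomized $p$-families (where $p^{(i)}_\beta \equiv p^{(i)}_1$ by Proposition~\ref{prp:p-family-value}) recovers the earlier corollary that a weighted harmonic mean of post-hoc $p$-values is a post-hoc $p$-value.
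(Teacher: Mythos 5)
Your argument is correct and follows the paper's own proof almost line for line: linearize the relative error $\beta/p_\beta$ via the harmonic-mean definition, push $\sup_\beta$ inside the sum by subadditivity, interchange sum and expectation, and apply the standalone post-hoc criterion of Theorem~\ref{thm:ph-valid-rand-p} to each summand. The only differences are cosmetic extras on your side — you explicitly verify that the merged family is callal and non-decreasing and flag the Tonelli step for countable $I$, whereas the paper works with finite $n$ and then notes in passing that the same reasoning extends to general mixtures.
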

			\begin{proof}
				Suppose we have $n$ $p$-functions $\widetilde{p}_i$, $i = 1, \dots, n$, and weights $w_i \geq 0$, $\sum_{i=1}^n w_i = 1$.
				Then,
				\begin{align*}
					\mathbb{E}^{H_0} \sup_{u \in (0, 1]} u \sum_{i=1}^n w_i (\widetilde{p}_i(u))^{-1}
						&\leq \mathbb{E}^{H_0} \sum_{i=1}^n w_i \sup_{u \in (0, 1]} u (\widetilde{p}_i(u))^{-1} \\\
						&\leq \sum_{i=1}^n w_i \mathbb{E}^{H_0} \sup_{u \in (0, 1]} u (\widetilde{p}_i(u))^{-1}.
				\end{align*}	
				Here, the final term is bounded by $\sum_{i=1}^n w_i = 1$ as each $\widetilde{p}$ is post-hoc.
				The same reasoning extends from weighted averages to mixtures, replacing the weighted sum by an expectation.
			\end{proof}

			The product of an arbitrary collection of independent post-hoc $p$-functions is not necessarily post-hoc valid.
			However, if they are individually all `not too randomized', then we do have that the product is post-hoc valid.
			We can express this in terms of a condition on the shape of the individual $p$-functions.
			We describe this in Theorem \ref{thm:prod_phrp}, which generalizes the result that the product of post-hoc $p$-values is post-hoc valid.
			
			The $p$-value result is recovered by setting $\widetilde{p}_i(u) = \widetilde{p}_i(1)$ for all $u \in (0, 1]$, and $i = 1, \dots, n$.
			To see that Theorem \ref{thm:prod_phrp} improves the non-randomized result, we can for example choose $p_i(u) = u^{1/n}p_i(1)$, for all $i = 1, \dots, n$.
			Since, $u \leq 1$, this dominates choosing $p_i(u) = p_i(1)$ for all $u \in (0, 1]$.
			
			\begin{thm}\label{thm:prod_phrp}
				Suppose we have $n$ independent post-hoc $p$-functions $\widetilde{p}_i$, $i = 1, \dots, n$.
				Suppose that they jointly satisfy the property $\prod_i^n \widetilde{p}_i(1)/\widetilde{p}_i(u) \leq 1/u$ for all $u \in (0, 1]$.
				Then, their product $\prod_{i=1}^n \widetilde{p}_i$ is also a post-hoc $p$-function.
			\end{thm}
			\begin{proof}
				We have
				\begin{align*}
					\sup_{\mathbb{P} \in H_0} & \mathbb{E}^{\mathbb{P}} \sup_{u \in (0, 1]} u \prod_{i=1}^n (p_i(u))^{-1}
						\leq \sup_{\mathbb{P} \in H_0} \mathbb{E}^{\mathbb{P}} \prod_{i=1}^n (p_i(1))^{-1} \\
						&= \sup_{\mathbb{P} \in H_0} \prod_{i=1}^n  \mathbb{E}^{\mathbb{P}} (p_i(1))^{-1}
						\leq  \prod_{i=1}^n  \sup_{\mathbb{P} \in H_0}  \mathbb{E}^{\mathbb{P}} (p_i(1))^{-1}
						\leq 1,
				\end{align*}	
				where the final inequality follows from Corollary \ref{cor:p_1_ph}, the second equality from independence, and the first inequality from the assumption.
			\end{proof}
			
			The condition $\prod_i^n \widetilde{p}_i(1) / \widetilde{p}_i(u) \leq 1/u$ limits how `randomized' the $p$-functions can be for large $n$.
			To see this, suppose that all $\widetilde{p}_i$ are of the same shape, then it imposes $\widetilde{p}_i(1) / \widetilde{p}_i(u) \leq u^{1/n}$.
			Moreover, $\widetilde{p}_i(1)/\widetilde{p}_i(u) \geq 1$ as $\widetilde{p}_i(u)$ is non-decreasing in $u$ by construction.
			As a consequence, we have $1 \leq \widetilde{p}_i(1) / \widetilde{p}_i(u) \leq u^{1/n}$.
			As $u^{1/n} \approx 1$ for large $n$, we have that $\widetilde{p}_i(u) \approx \widetilde{p}_i(1)$ for all $u \in (0, 1]$ if $n$ is large.
			That is, for large $n$, this condition is essentially only satisfied if the $p$-functions are non-randomized.
			
			We formalize the importance of non-randomization for product-merging in Theorem \ref{thm:prod_merge}.
			This result states that essentially only non-randomized post-hoc $p$-functions can be arbitrarily multiplied together.
			
			Here, we say that a $p$-family is `properly' randomized (with respect to $H_0$) if $\mathbb{E}^\mathbb{P} (p(1))^{-1}$ is bounded away from $\mathbb{E}^{\mathbb{P}}(p(u^*))^{-1}$ uniformly in $\mathbb{P} \in H_0$, for some $u^* \in (0, 1]$.
			
			\begin{thm}\label{thm:prod_merge}
				Suppose we have at most countably infinitely many i.i.d. copies of a randomized $p$-function $\widetilde{p}$ with $\mathbb{E}^{H_0}(\widetilde{p}(1))^{-1} = 1$.
				If every product of these copies is post-hoc valid, then the $p$-functions are not properly randomized. 
				
				If we additionally assume that $H_0$ is finite, then the $p$-functions are non-randomized.
			\end{thm}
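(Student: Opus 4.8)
The plan is to sidestep any attempt to control the full post-hoc error of the product $p$-family and instead exploit a single fixed slice together with the exponential blow-up created by the i.i.d.\ structure. Fix $\beta^*\in(0,1]$ and $n\ge 1$. By Theorem~\ref{thm:ph-valid-rand-p}, post-hoc validity of the $n$-fold product $\bigl(\prod_{i=1}^n p_\beta^i\bigr)_{\beta\in(0,1]}$ says $\sup_{\mathbb{P}\in H_0}\mathbb{E}^{\mathbb{P}}\sup_{\beta\in(0,1]}\beta\prod_{i=1}^n(p_\beta^i)^{-1}\le 1$. I would lower bound the inner supremum by its value at $\beta=\beta^*$ and then factor the expectation using independence and identical distribution, obtaining $\beta^*\bigl(\mathbb{E}^{\mathbb{P}}(p_{\beta^*})^{-1}\bigr)^n\le 1$ for every $\mathbb{P}\in H_0$ and every $n$ (the expectations are finite since the case $n=1$ already forces $\mathbb{E}^{\mathbb{P}}(p_{\beta^*})^{-1}\le 1/\beta^*$ via Theorem~\ref{thm:ph-valid-rand-p}). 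Letting $n\to\infty$ in $\mathbb{E}^{\mathbb{P}}(p_{\beta^*})^{-1}\le(1/\beta^*)^{1/n}$ then yields $\mathbb{E}^{\mathbb{P}}(p_{\beta^*})^{-1}\le 1$ for all $\mathbb{P}\in H_0$ and all $\beta^*\in(0,1]$.

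Next I would bring in monotonicity of $\beta\mapsto p_\beta$, which gives $(p_{\beta^*})^{-1}\ge(p_1)^{-1}$ pointwise, hence $\mathbb{E}^{\mathbb{P}}(p_1)^{-1}\le\mathbb{E}^{\mathbb{P}}(p_{\beta^*})^{-1}\le 1$ for all $\mathbb{P}\in H_0$. Choosing $\mathbb{P}_k\in H_0$ with $\mathbb{E}^{\mathbb{P}_k}(p_1)^{-1}\to\sup_{\mathbb{P}\in H_0}\mathbb{E}^{\mathbb{P}}(p_1)^{-1}=1$ squeezes $\mathbb{E}^{\mathbb{P}_k}(p_{\beta^*})^{-1}$ between $\mathbb{E}^{\mathbb{P}_k}(p_1)^{-1}$ and $1$, so $\mathbb{E}^{\mathbb{P}_k}\bigl[(p_{\beta^*})^{-1}-(p_1)^{-1}\bigr]\to 0$. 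Since this happens for every $\beta^*$, the gap $\mathbb{E}^{\mathbb{P}}(p_{\beta^*})^{-1}-\mathbb{E}^{\mathbb{P}}(p_1)^{-1}$ is never bounded away from $0$ uniformly over $H_0$; that is, the $p$-family is not properly randomized.

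For the finite-$H_0$ refinement, the supremum over $H_0$ is attained at some worst-case null $\mathbb{P}^\dagger$ with $\mathbb{E}^{\mathbb{P}^\dagger}(p_1)^{-1}=1$, so the squeeze becomes an exact equality $\mathbb{E}^{\mathbb{P}^\dagger}\bigl[(p_{\beta^*})^{-1}-(p_1)^{-1}\bigr]=0$ for each $\beta^*$. A non-negative random variable with zero expectation vanishes almost surely, so $p_{\beta^*}=p_1$ holds $\mathbb{P}^\dagger$-almost surely; running this over a countable dense set of values $\beta^*$ and invoking monotonicity in $\beta$ upgrades it to $p_\beta=p_1$ for all $\beta\in(0,1]$ simultaneously, i.e.\ the $p$-family is non-randomized (and in particular cannot have been properly, or even genuinely, randomized to begin with).

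The one genuinely non-obvious step is the first: it is tempting to think one must analyse the full supremum $\sup_{\beta}\beta\prod_i(p_\beta^i)^{-1}$, whose maximizing $\beta$ fluctuates across the i.i.d.\ copies and looks intractable, whereas in fact a single fixed slice $\beta=\beta^*$ already suffices, because the i.i.d.\ product turns $\mathbb{E}^{\mathbb{P}}(p_{\beta^*})^{-1}$ into its $n$-th power, which is forced to stay bounded. Everything else is routine; the only genuine care needed is the measure-theoretic bookkeeping in the finite-$H_0$ step — taking a countable dense set of $\beta^*$ so the identity $p_\beta=p_1$ holds simultaneously in $\beta$, and reading ``non-randomized'' modulo $\mathbb{P}^\dagger$-null sets.
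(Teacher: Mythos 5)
Your proof is correct and follows essentially the same route as the paper: fix a slice $\beta^*$, use independence to turn the expectation of the $n$-fold product into an $n$-th power, force $\beta^*\bigl(\mathbb{E}^{\mathbb{P}}(p_{\beta^*})^{-1}\bigr)^n\le 1$, and then let the number of copies grow to conclude $\mathbb{E}^{\mathbb{P}}(p_{\beta^*})^{-1}\le 1$ for every $\mathbb{P}$ and $\beta^*$, after which the condition $\sup_{\mathbb{P}\in H_0}\mathbb{E}^{\mathbb{P}}(p_1)^{-1}=1$ together with monotonicity in $\beta$ forecloses proper randomization (and, when $H_0$ is finite and the supremum is attained, forces $p_\beta=p_1$ almost surely). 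Your passage to the limit via $\mathbb{E}^{\mathbb{P}}(p_{\beta^*})^{-1}\le(1/\beta^*)^{1/n}\to 1$ is a slightly cleaner rendering than the paper's informal $\bigl[\sup_{\mathbb{P}}\mathbb{E}^{\mathbb{P}}(p_{\beta^*})^{-1}\bigr]^{\infty}$ bookkeeping, and your explicit flag that the finite-$H_0$ conclusion holds only modulo $\mathbb{P}^\dagger$-null sets is a correct reading of what the argument actually delivers.
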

			\begin{proof}[Proof of Theorem \ref{thm:prod_merge}]
				The strategy is to assume the $p$-function is randomized, and then show that this is in contradiction with the assumption that they are post-hoc valid.
				We only assume proper randomization at the very end, to handle the supremum over a possibly infinite $H_0$.
				
				As the $p$-function $\widetilde{p}$ is randomized, there exists some $u^*$ such that $\widetilde{p}(1) > \widetilde{p}(u^*)$.
				Let $B$ index the collection $(\widetilde{p}_i)_{i \in B}$ of i.i.d. copies of the $p$-function.
				
				First, by the post-hoc validity of the product and the by the fact that $u^*$ is not necessarily the optimizer, we have
				\begin{align*}
					1 
						&\geq \sup_{I \subseteq B} \sup_{\mathbb{P} \in H_0} \mathbb{E}^{\mathbb{P}} \sup_u u \prod_{i \in I} (\widetilde{p}_i(u))^{-1} \\
						&\geq \sup_{I \subseteq B} \sup_{\mathbb{P} \in H_0} \mathbb{E}^{\mathbb{P}} u^* \prod_{i \in I} (\widetilde{p}_i(u^*))^{-1}.
				\end{align*}
				Now, rewriting the final term yields
				\begin{align*}
					\sup_{I \subseteq B} \sup_{\mathbb{P} \in H_0} &\mathbb{E}^{\mathbb{P}} u^* \prod_{i \in I} (\widetilde{p}_i(u^*))^{-1} \\
						&= \sup_{I \subseteq B} \sup_{\mathbb{P} \in H_0} \mathbb{E}^{\mathbb{P}} \prod_{i \in I}  (u^*)^{1/|I|} (\widetilde{p}_i(u^*))^{-1}.
				\end{align*}
				Next, by the i.i.d assumption we can further rewrite it as
				\begin{align*}
					\sup_{I \subseteq B} \sup_{\mathbb{P} \in H_0} &\mathbb{E}^{\mathbb{P}} \prod_{i \in I}  (u^*)^{1/|I|} (\widetilde{p}_i(u^*))^{-1} \\
						&=\sup_{I \subseteq B} \sup_{\mathbb{P} \in H_0} \prod_{i \in I} \mathbb{E}^{\mathbb{P}} (u^*)^{1/|I|} (\widetilde{p}_i(u^*))^{-1} \\
						&= \sup_{1 \leq |I| \leq \infty} \left[\sup_{\mathbb{P} \in H_0} \mathbb{E}^{\mathbb{P}} (u^*)^{1/|I|} (\widetilde{p}(u^*))^{-1}\right]^{|I|}.
				\end{align*}
				Restricting ourselves to infinite sets, we have
				\begin{align*}
					1 &\geq \sup_{1 \leq |I| \leq \infty} \left[\sup_{\mathbb{P} \in H_0} \mathbb{E}^{\mathbb{P}} (u^*)^{1/|I|} (\widetilde{p}(u^*))^{-1}\right]^{|I|} \\
						&\geq \sup_{|I| = \infty} \left[ \sup_{\mathbb{P} \in H_0}\mathbb{E}^{\mathbb{P}} (u^*)^{1/|I|} (\widetilde{p}(u^*))^{-1}\right]^{|I|} \\
						&= \sup_{|I| = \infty} \left[ \sup_{\mathbb{P} \in H_0}\mathbb{E}^{\mathbb{P}} (\widetilde{p}(u^*))^{-1}\right]^{|I|} \\
						&= \left[\sup_{\mathbb{P} \in H_0} \mathbb{E}^{\mathbb{P}} (\widetilde{p}(u^*))^{-1}\right]^{\infty},
				\end{align*}
				since $u^* \in (0, 1]$.
				
				As $\widetilde{p}(1) > \widetilde{p}(u^*)$, we have $\mathbb{E}^{\mathbb{P}} (\widetilde{p}(u^*))^{-1} > \mathbb{E}^{\mathbb{P}} (\widetilde{p}(1))^{-1}$ for every $\mathbb{P} \in H_0$.
				This observation allows us to finish the case that $H_0$ is finite.
				In particular, as $H_0$ is finite, its supremum is attained.
				As a consequence if $\mathbb{E}^{\mathbb{P}} (\widetilde{p}(u^*))^{-1} > \mathbb{E}^{\mathbb{P}} (\widetilde{p}(1))^{-1}$ for every $\mathbb{P} \in H_0$, we have that
				\begin{align}\label{ineq:strict}
					\sup_{\mathbb{P} \in H_0} \mathbb{E}^{\mathbb{P}} (\widetilde{p}(u^*))^{-1} > \sup_{\mathbb{P} \in H_0} \mathbb{E}^{\mathbb{P}} (\widetilde{p}(1))^{-1} = 1.
				\end{align}
				As a consequence,
				\begin{align*}
					1 
						\geq \left[\sup_{\mathbb{P} \in H_0} \mathbb{E}^{\mathbb{P}} (\widetilde{p}(u^*))^{-1}\right]^{\infty}
						= \infty,
				\end{align*}
				which is a contradiction.
				
				For the case that $H_0$ is infinite, it is insufficient for \eqref{ineq:strict} to assume that $\widetilde{p}(u^*) > \widetilde{p}(1)$.
				However, the result is recovered as we assume $\mathbb{E}^{\mathbb{P}} \widetilde{p}(u^*)$ is bounded away from $\mathbb{E}^{\mathbb{P}} \widetilde{p}(1)$, uniformly in $\mathbb{P} \in H_0$. 
			\end{proof}
			
	\section{Proofs}
		\subsection{Proof of Proposition \ref{prp:minimal_h}}\label{proof:minimal_h}
			\begin{proof}
				The first claim is equivalent to Theorem \ref{thm:post-hoc_is_valid}.
			
				For the second claim, fix $h < 1$ and $q \in (0,1)$, and let $e = M > 1$ with probability $q$ and $0$, otherwise.
				Then $\rho_{h}(e) = (qM^{h})^{1/h} = q^{1/h} M$.
				The induced level-$\alpha$ test is $\phi_e(\alpha) = \alpha^{-1}\mathbb{I}\{e \geq 1/\alpha\}$.
				Now, since $e \in \{0,M\}$ we have $\mathbb{E}[\phi_e(\alpha)] = 0$ if $\alpha < 1/M$ and $\mathbb{E}[\phi_e(\alpha)] = q/\alpha$ if $\alpha \geq 1/M$, so that $\sup_{\alpha} \mathbb{E}[\phi_e(\alpha)] = qM$.
				Choosing $M = q^{-1/h}$ gives $\rho_{h}(e) = 1$ and $\sup_{\alpha} \mathbb{E}[\phi_e(\alpha)] = qM = q^{1 - 1/h} > 1$, since $1 - 1/h < 0$.
				Hence, $e$ is $h$-valid but $\phi_e$ is not valid.
			\end{proof}
		
		\subsection{Proof of Proposition \ref{prp:harmonic_size_difference}}\label{proof:harmonic_size_difference}
			\begin{proof}
				\begin{align*}
					&\sup_{\widetilde{\alpha}}\mathbb{E}_{\widetilde{\alpha}}[\mathbb{P}(\phi(\widetilde{\alpha}) = \textnormal{reject at } \widetilde{\alpha} \mid \widetilde{\alpha}) - \widetilde{\alpha}] \\
						&= \sup_{\widetilde{\alpha}}\mathbb{E}_{\widetilde{\alpha}}[\mathbb{E}[\mathbb{I}\{\phi(\widetilde{\alpha}) = \textnormal{reject at } \widetilde{\alpha}\}  \mid \widetilde{\alpha}] - \widetilde{\alpha}] \\
						&= \sup_{\widetilde{\alpha}}\mathbb{E}_{\widetilde{\alpha}}[\mathbb{E}[\mathbb{I}\{\phi(\widetilde{\alpha}) = \textnormal{reject at } \widetilde{\alpha}\} - \widetilde{\alpha}  \mid \widetilde{\alpha}] ] \\
						&= \sup_{\widetilde{\alpha}}\mathbb{E}[\mathbb{I}\{\phi(\widetilde{\alpha}) = \textnormal{reject at } \widetilde{\alpha}\} - \widetilde{\alpha}  ] \\
						&= \mathbb{E}\left[\sup_{\alpha : \phi(\alpha) = \textnormal{reject at } \alpha} \mathbb{I}\{\phi(\alpha) = \textnormal{reject at } \alpha\} - \alpha\right] \\
						&= \mathbb{E}[1 - \inf\{\alpha : \phi(\alpha) = \textnormal{reject at } \alpha\}] 
						= \mathbb{E}[1 - p].
				\end{align*}
				Hence, $\phi$ is post-hoc valid in this sense if and only if $\mathbb{E}[p] \geq 1$.
				Equivalently, its $e$-value $e = 1/p$ is harmonic.
			\end{proof}
	
	\section{Measurability and conditioning}\label{sec:measurability_conditioning}
		\subsection{Measurability}		
			We define measurability with respect to an information structure $\mathcal{I} \subseteq 2^{\mathcal{X}}$.
			To define this, we write $x \sim_{\mathcal I} y$ if and only if for every $A \in \mathcal I$ we have $x \in A \iff y \in A$.
			For each $x \in \mathcal X$, we call
			\begin{align*}
				C_{\mathcal I}(x) := \{ y \in \mathcal X : y \sim_{\mathcal{I}} x \}
			\end{align*}
			the \emph{information cell} of $x$ induced by $\mathcal{I}$.
			The family of such information cells $\mathcal{C}(\mathcal{I})$ partitions $\mathcal{X}$.
			
			\begin{dfn}[Measurability] 
				We say $\varepsilon$ is $\mathcal{I}$-measurable if it is constant on each information cell:  $x, y \in C \implies \varepsilon(x) = \varepsilon(y)$, for every $C \in \mathcal{C}(\mathcal{I})$. 
			\end{dfn}
			
			Under this notion of measurability, we need not worry about the measurability of a supremum of measurable evidence variables.
			
			\begin{lem}
				The supremum $\sup_{i} \varepsilon_i$ of a collection of measurable evidence variables is measurable.
			\end{lem}
			\begin{proof}
				By $\mathcal{I}$-measurability, $\varepsilon_i$ is constant on every cell $C \in \mathcal{C}(\mathcal{I})$, and so the supremum is as well.	
			\end{proof}		
							
			\begin{rmk}
				The notion of $\mathcal{I}$-measurability here can be viewed as classical measurability with respect to the $\sigma$-algebra $\Sigma_{\mathcal{I}}$ of unions of information cells $C \in \mathcal{C}(\mathcal{I})$, which is the atomic $\sigma$-algebra with the information cells as atoms.
				If $\varepsilon$ is $\mathcal{I}$-measurable in our sense, then for any $\sigma$-algebra $\Sigma_\mathcal{D}$ on $\mathcal{D}$ the map $\varepsilon : (\mathcal{X}, \Sigma_{\mathcal{I}}) \to (\mathcal{D}, \Sigma_{\mathcal{D}})$ is classically measurable.
				Conversely, if $\Sigma_{\mathcal{D}}$ separates points, then classical measurability of $\varepsilon$ implies it is constant on information cells, and hence $\mathcal{I}$-measurable in our sense.
			\end{rmk}
			
		\subsection{Conditioning}
			Using this notion of measurability, we can capture the axiom of replacement of \citet{kolmogorov1930notion}.
			To our surprise, this axiom may be viewed as defining a concept of conditional expectation on total orders, which in itself may be seen as defining a notion of compatibility across information structures.
		
			Given two $\mathcal{I}$-measurable evidence variables $\varepsilon, \eta$ and $A \in \mathcal{I}$, we define their $A$-mixture as
			\begin{align*}
				(\varepsilon \oplus_A \eta)(x)
					:=
					\begin{cases}
						\varepsilon(x), & x \in A, \\
						\eta(x), & x \not\in A.	
					\end{cases}
			\end{align*}
			The resulting mixture $\varepsilon \oplus_A \eta$ of evidence variables is again $\mathcal{I}$-measurable for $A \in \mathcal{I}$.
			
			Let $\mathcal{S} \subseteq \mathcal{I}$ be a sub-information-structure of $\mathcal{I}$.
			A \emph{certainty equivalent kernel} for $\mathcal{S}$ is a mapping $(\varepsilon, x) \mapsto \rho(\varepsilon \mid \mathcal S)(x)$ such that, for every $\mathcal{I}$-measurable evidence variable $\varepsilon$, the map $x \mapsto \rho(\varepsilon \mid \mathcal S)(x)$ is $\mathcal{S}$-measurable, and for every $x \in \mathcal X$, the map $\varepsilon \mapsto \rho(\varepsilon \mid \mathcal S)(x)$,
			restricted to $\mathcal S$-measurable evidence variables, is an idempotent and monotone certainty-equivalent operator.
			
			We are now ready to introduce an order-theoretic version of the replacement axiom of \citet{kolmogorov1930notion}.
			The term `replacement' comes from the fact that we replace $\varepsilon$ on a set $A$ by an aggregation $\rho(\varepsilon \mid \mathcal{S})$.
			
			\begin{itemize}
				\item[]	\textbf{Replacement} \\
					There exists a unique certainty equivalent kernel $\rho(\cdot \mid \mathcal{S})$ for $\mathcal{S}$ so that for every $\mathcal{I}$-measurable evidence variable $\varepsilon$,
					\begin{align*}
						\rho(\rho(\varepsilon \mid \mathcal{S}) \oplus_A \varepsilon)
							= \rho(\varepsilon), \text{ for every } A \in \mathcal{S}.
					\end{align*}
			\end{itemize}
			
			We leave it to future work to explore sequential testing on total orders under information-structure filtrations.

\end{document}